\begin{document}
\newtheorem{problem}{Problem}
\newtheorem{theorem}{Theorem}
\newtheorem{lemma}[theorem]{Lemma}
\newtheorem{claim}[theorem]{Claim}
\newtheorem{cor}[theorem]{Corollary}
\newtheorem{prop}[theorem]{Proposition}
\newtheorem{definition}{Definition}
\newtheorem{question}[theorem]{Question}

\def\cA{{\mathcal A}}
\def\cB{{\mathcal B}}
\def\cC{{\mathcal C}}
\def\cD{{\mathcal D}}
\def\cE{{\mathcal E}}
\def\cF{{\mathcal F}}
\def\cG{{\mathcal G}}
\def\cH{{\mathcal H}}
\def\cI{{\mathcal I}}
\def\cJ{{\mathcal J}}
\def\cK{{\mathcal K}}
\def\cL{{\mathcal L}}
\def\cM{{\mathcal M}}
\def\cN{{\mathcal N}}
\def\cO{{\mathcal O}}
\def\cP{{\mathcal P}}
\def\cQ{{\mathcal Q}}
\def\cR{{\mathcal R}}
\def\cS{{\mathcal S}}
\def\cT{{\mathcal T}}
\def\cU{{\mathcal U}}
\def\cV{{\mathcal V}}
\def\cW{{\mathcal W}}
\def\cX{{\mathcal X}}
\def\cY{{\mathcal Y}}
\def\cZ{{\mathcal Z}}

\def\A{{\mathbb A}}
\def\B{{\mathbb B}}
\def\C{{\mathbb C}}
\def\D{{\mathbb D}}
\def\E{{\mathbb E}}
\def\F{{\mathbb F}}
\def\G{{\mathbb G}}
\def\I{{\mathbb I}}
\def\J{{\mathbb J}}
\def\K{{\mathbb K}}
\def\L{{\mathbb L}}
\def\M{{\mathbb M}}
\def\N{{\mathbb N}}
\def\O{{\mathbb O}}
\def\P{{\mathbb P}}
\def\Q{{\mathbb Q}}
\def\R{{\mathbb R}}
\def\S{{\mathbb S}}
\def\T{{\mathbb T}}
\def\U{{\mathbb U}}
\def\V{{\mathbb V}}
\def\W{{\mathbb W}}
\def\X{{\mathbb X}}
\def\Y{{\mathbb Y}}
\def\Z{{\mathbb Z}}

\def\ep{{\mathbf{e}}_p}
\def\em{{\mathbf{e}}_m}
\def\eq{{\mathbf{e}}_q}

\def\scr{\scriptstyle}
\def\\{\cr}
\def\({\left(}
\def\){\right)}
\def\[{\left[}
\def\]{\right]}
\def\<{\langle}
\def\>{\rangle}
\def\fl#1{\left\lfloor#1\right\rfloor}
\def\rf#1{\left\lceil#1\right\rceil}
\def\le{\leqslant}
\def\ge{\geqslant}
\def\eps{\varepsilon}
\def\mand{\qquad\mbox{and}\qquad}

\def\sssum{\mathop{\sum\ \sum\ \sum}}
\def\ssum{\mathop{\sum\, \sum}}
\def\ssumw{\mathop{\sum\qquad \sum}}

\def\vec#1{\mathbf{#1}}
\def\inv#1{\overline{#1}}
\def\num#1{\mathrm{num}(#1)}
\def\dist{\mathrm{dist}}

\def\fA{{\mathfrak A}}
\def\fB{{\mathfrak B}}
\def\fC{{\mathfrak C}}
\def\fU{{\mathfrak U}}
\def\fV{{\mathfrak V}}

\newcommand{\bflambda}{{\boldsymbol{\lambda}}}
\newcommand{\bfxi}{{\boldsymbol{\xi}}}
\newcommand{\bfrho}{{\boldsymbol{\rho}}}
\newcommand{\bfnu}{{\boldsymbol{\nu}}}

\def\GL{\mathrm{GL}}
\def\SL{\mathrm{SL}}

\def\Hba{\overline{\cH}_{a,m}}
\def\Hta{\widetilde{\cH}_{a,m}}
\def\Hb1{\overline{\cH}_{m}}
\def\Ht1{\widetilde{\cH}_{m}}

\def\flp#1{{\left\langle#1\right\rangle}_p}
\def\flm#1{{\left\langle#1\right\rangle}_m}
\def\dmod#1#2{\left\|#1\right\|_{#2}}
\def\dmodq#1{\left\|#1\right\|_q}

\def\Zm{\Z/m\Z}

\def\Err{{\mathbf{E}}}

\newcommand{\comm}[1]{\marginpar{%
\vskip-\baselineskip 
\raggedright\footnotesize
\itshape\hrule\smallskip#1\par\smallskip\hrule}}
\pagestyle{plain}
\def\xxx{\vskip5pt\hrule\vskip5pt}

 \author[M. Bordignon] {Matteo Bordignon}
\address{School of Science, The University of New South Wales Canberra, Australia}
\email{m.bordignon@student.adfa.edu.au}

 \author[B. Kerr] {Bryce Kerr }
\address{School of Science, The University of New South Wales Canberra, Australia}
\email{b.kerr@adfa.edu.au}
\thanks{The second author was supported by Australian Research Council Discovery Project DP160100932.}
\title{\bf An explicit P\'{o}lya-Vinogradov inequality via Partial Gaussian sums}

\date{\today}

\begin{abstract}
In this paper we obtain a new fully explicit constant for the P\'olya-Vinogradov inequality for squarefree modulus. Given a primitive character $\chi$ to squarefree modulus $q$, we prove the following upper bound
\begin{align*}
\left| \sum_{1 \le n\le N} \chi(n) \right|\le c \sqrt{q} \log q,
\end{align*}
where $c=1/(2\pi^2)+o(1)$ for even characters and $c=1/(4\pi)+o(1)$ for odd characters, with an explicit  $o(1)$ term. This improves a result of Frolenkov and Soundararajan for large $q$. We proceed via partial Gaussian sums rather than the usual Montgomery and Vaughan approach of exponential sums with multiplicative coefficients. This allows a power saving on the minor arcs rather than a factor of $\log{q}$ as in previous approaches and is an important factor for fully explicit bounds.
\end{abstract}

\maketitle


\maketitle
\section{Introduction}

Given two integers $N,q$ and a primitive character $\chi$ modulo $q$ consider the sums
\begin{align*}
S(\chi):=\max_{N\le q}\left| \sum_{1 \le n\le N} \chi(n) \right|.
\end{align*}
A bound, proven independently by P\'olya and Vinogradov in the early 1900s, is the following
\begin{align}
\label{eq:SchiN}
S(\chi) \le c \sqrt{q} \log q,
\end{align}
for some absolute constant $c$. For long character sums this inequality has remainded the sharpest known and an important problem is to improve on the $\log{q}$ factor in~\eqref{eq:SchiN}. This problem is more or less resolved assuming the Generalized Riemann Hypothesis.
Paley \cite{Paley} proved that there exist infinitely many integers $q$ and primitive characters $\chi$ modulo $q$, such that 
\begin{align*}
S(\chi) \gg \sqrt{q} \log \log q,
\end{align*}
and it was proven by Montgomery and Vaughan \cite{MV} that $S(\chi)$ has an upper bound of the same order of magnitude assuming the Generalized Riemann Hypothesis. 
\newline

Progress on unconditional improvements to~\eqref{eq:SchiN} has  two main themes. The first aims at improving the asymptotic size of the constant $c$, thus allowing a $o(1)$ term. The sharpest results in this direction are due to Granville and Soundararajan~\cite{GS}, see also~\cite{Kerr} which deals with the case of arbitrary intervals. The second aims to  determine exactly the $o(1)$ term for a given constant $c$ and we refer the reader to~\cite{Fro,FS,Pomerance} for a series of bounds in this direction. Both problems are known to be closely related to estimating short character sums. A simple way to see this is via Fourier expansion into Gauss sums
$$\chi(n)=\frac{1}{\tau(\chi)}\sum_{\lambda}\chi(\lambda)e_q(\lambda n),$$
which transforms
\begin{align}
\label{eq:basic1}
\sum_{1\le n \le N}\chi(n)\ll \frac{1}{q^{1/2}}\sum_{\lambda}\chi(\lambda)\left(\sum_{1\le n \le N}e_q(\lambda n) \right).
\end{align}
Using the heuristic 
\begin{align*}
\sum_{1\le n \le N}e_q(\lambda n)\sim \begin{cases} N \quad \text{if} \quad \lambda \ll q/N, \\ 0 \quad \text{otherwise}, \end{cases}
\end{align*}
gives 
\begin{align*}
\sum_{1\le n \le N}\chi(n)\ll \frac{N}{q^{1/2}}\sum_{\lambda \ll q/N}\chi(\lambda),
\end{align*}
and hence transforms estimating sums of length $N$ to sums of length $q/N$, an observation which first appears to be due to A. I. Vinogradov~\cite{Vin}. Making the above heuristics rigorous one obtains sums twisted by additive characters 
\begin{align}
\label{eq:partialgaussian}
\sum_{1\le \lambda \le q/N}\chi(\lambda)e_q(a \lambda),
\end{align}
and the constant $c$ in~\eqref{eq:SchiN} which may be obtained by this method depends on how short sums of the form~\eqref{eq:partialgaussian} may be estimated. For example, if for any integer $a$ we have 
\begin{align}
\label{eq:hhhil}
\sum_{1\le n \le N}\chi(n)e_q(a n)=o(N), \quad \text{provided} \quad N\ge q^{\delta},
\end{align}
then the constant $c$ in~\eqref{eq:SchiN} may be taken 
$$c=\delta(1+o(1))/\pi.$$ The details of this argument were first worked out by Hildebrand~\cite{Hild1} and based on ideas of Montgomery and Vaughan~\cite{MV}. One of the key ingredients in is Hildebrand's argument is the Burgess bound~\cite{Bur2,Bur6}, which states that for any primitive $\chi$ mod $q$
\begin{align}
\label{eq:bur}
\sum_{M<n\le M+N}\chi(n) \ll N^{1-1/r}q^{(r+1)/4r^2+o(1)},
\end{align}
provided $r\le 3$ or any $r\ge 2$ if $q$ is cubefree. 
\newline

 In this paper we revisit Hildebrand's argument and obtain the first fully explicit P\'olya-Vinogradov inequality with a constant $c$ below the barrier 
\begin{align*}
c< \begin{cases} \frac{1}{\pi^2}& \mbox{if } \ \ \chi(-1)=1,\\ \frac{1}{2\pi} & \mbox{if } \ \ \chi(-1)=-1, \end{cases}
\end{align*}
which is the limit of previous approaches to an explicit P\'olya-Vinogradov inequality.  The argument of Hildebrand applies a discrete circle method to estimate the sums~\eqref{eq:partialgaussian} and uses the Burgess bound~\eqref{eq:bur} on the major arcs and an estimate of Montgomery and Vaughan~\cite{MV} on the minor arcs. The estimate of Montgomery and Vaughan states that provided the real number $\alpha$ has suitable rational approximation, then we have 
\begin{align}
\label{eq:mv}
\sum_{1\le n \le N}\chi(n)e(\alpha n)\ll \frac{N}{\log{N}},
\end{align}
which allows estimation of very short ranges of the parameter $N$. The bottleneck in the argument is the minor arcs which use the Burgess bound and provide a nontrivial estimate for at best $N\ge q^{1/4+o(1)}$. For an explicit variant of Hildebrand's result, the bottleneck switches from the Burgess bound to Montgomery and Vaughan's estimate. This can be seen by comparing the power saving in the Burgess bound with only a logarithmic factor in~\eqref{eq:mv}. In order to avoid this difficulty we consider an approach which appeals directly to estimates for partial Gaussian sums, which are defined as sums of the form
\begin{align}
\label{eq:hhhil}
\sum_{1\le n \le N}\chi(n)e_q(a n).
\end{align}
These sums were first considered by Burgess for prime modulus~\cite{Bur3} and extended to composite and prime power modulus in~\cite{Bur4,Bur5} for some restricted ranges of parameters. In this paper we extend and make explicit the results of Burgess for the case of squarefree modulus. This requires obtaining uniform estimates for the mean values 
\begin{align*}
\sum_{\mu=1}^{q}\sum_{\lambda=1}^{q}\left| \sum_{1\le v \le V}\chi(\lambda+v)e_q(\mu v)\right|^{2r}.
\end{align*}
Following the approach of Burgess, we reduce to counting lattice points in certain convex bodies averaged over a family of lattices and our main novelty is to appeal to transference principles from the geometry of numbers.
\newline

Fully explicit P\'{o}lya-Vinogradov inequalities have previously been considered by Frolenkov~\cite{Fro}, Frolenkov and Soundararajan ~\cite{FS} and Pomerance~\cite{Pomerance}. The current sharpest result is Frolenkov and Soundararajan~\cite{FS} which states that for all primitive characters $\chi$ we have
\begin{align}
\label{FS}
S(\chi) \le \begin{cases} \frac{1}{\pi^2}\sqrt{q} \log q + \frac{1}{2}\sqrt{q}& \mbox{if } \ \ \chi(-1)=1, ~ q\ge 1200,\\ \frac{1}{2\pi} \sqrt{q} \log q + \sqrt{q} & \mbox{if } \ \ \chi(-1)=-1,~ q \ge 40.  \end{cases}.
\end{align}
 Our main result  is an improvement on~\eqref{FS} for large $q$. A simplified statement of our Theorem~\ref{thm:main} where some accuracy is lost in the secondary terms and range of parameters  is the following.
\begin{cor}
\label{cor:main1}
Let $\ell\ge 2$ be an integer. Suppose $q$ is squarefree and satisfies
\begin{align*}
\log q \ge e^{1088\ell^2}.
\end{align*}
Then for any primitive character $\chi$ mod $q$ we have 
  \begin{align*}
  S(\chi) \le
  \begin{cases}
     \frac{2}{\pi^2}(\frac{1}{4}+\frac{1}{4\ell}) \sqrt{q}\log q +\left(6.5+\frac{1}{1088 \ell}\right)\sqrt{q} \ \  ~~\text{if}~~\ \chi(-1)=1,\\
    ~\\
   \frac{1}{\pi} (\frac{1}{4}+\frac{1}{4\ell}) \sqrt{q} \log q+ \left(6.5+\frac{1}{1088 \ell}\right)\sqrt{q} \ \ ~~\text{if}~~ \ \chi(-1)=-1.
  \end{cases}
\end{align*}
\end{cor}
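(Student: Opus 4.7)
The plan is to follow the Hildebrand-style strategy outlined in the introduction. I would start with the Gauss-sum Fourier expansion, which for $N \le q$ gives
\begin{align*}
\sum_{1\le n \le N}\chi(n) = \frac{1}{\tau(\chi)}\sum_{\lambda=1}^{q-1}\chi(\lambda)\sum_{1 \le n \le N}e_q(\lambda n).
\end{align*}
Pairing $\lambda$ with $q-\lambda$ and exploiting $\chi(-1)=\pm 1$, the main contribution comes from small $|\lambda|$ modulo $q$ and, after closed-form evaluation of the geometric sum, reduces to $q^{-1/2}\sum_{1 \le \lambda \le (q-1)/2}\chi(\lambda)K(\lambda/q)$ for a cotangent-type kernel $K$ when $\chi(-1)=1$ and a cosecant-type kernel when $\chi(-1)=-1$.

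Next, I would introduce the Burgess threshold $L = q^{1/4 + 1/(4\ell)}$ (the cutoff at which a partial Gaussian sum estimate of moment parameter $\ell$ begins to beat the trivial bound) and split the $\lambda$-sum at $\lambda = L$. For $\lambda \le L$, I would use $|\chi(\lambda)| \le 1$ and evaluate the kernel explicitly; this reduces to integrals of $\cot(\pi x)$ and $\csc(\pi x)$ over an interval of length comparable to $L/q$, and after the standard logarithmic estimate produces the leading term $c(\ell)\sqrt{q}\log q$ with
\begin{align*}
c(\ell) = \frac{2}{\pi^2}\left(\frac{1}{4}+\frac{1}{4\ell}\right) \text{ for even } \chi, \qquad c(\ell)=\frac{1}{\pi}\left(\frac{1}{4}+\frac{1}{4\ell}\right) \text{ for odd } \chi.
\end{align*}
For $\lambda > L$, I would apply partial summation to $\chi$ against the kernel and insert the explicit partial Gaussian sum bound of the shape $\bigl|\sum_{\lambda \le M}\chi(\lambda)e_q(a\lambda)\bigr| \le M^{1-1/\ell}q^{(\ell+1)/(4\ell^2) + o(1)}$ developed in the body of the paper; since the kernel decays like $q/\lambda$, the contribution is $O(\sqrt{q})$ and is absorbed into the error term $(6.5 + 1/(1088\ell))\sqrt{q}$.

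The main obstacle, and the central novelty, is the explicit partial Gaussian sum bound for squarefree modulus. Following Burgess, it reduces to an explicit bound on the $2\ell$-th moment
\begin{align*}
\sum_{\mu = 1}^{q}\sum_{\lambda = 1}^{q}\left|\sum_{v \le V}\chi(\lambda + v)e_q(\mu v)\right|^{2\ell},
\end{align*}
which the paper handles by counting lattice points in an explicit family of convex bodies and then averaging via transference principles from the geometry of numbers. The hypothesis $\log q \ge e^{1088\ell^2}$ is precisely what is required to absorb the $q^{o(1)}$ factor (a power of $\log q$ of degree $O(\ell^2)$) into the stated constants, while the coefficient $\frac{1}{4}+\frac{1}{4\ell}$ in the leading term reads off directly from the Burgess exponent $(\ell+1)/(4\ell^2) = \frac{1}{4\ell}+\frac{1}{4\ell^2}$ governing the cutoff $L$.
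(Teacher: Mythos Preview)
Your plan is essentially the paper's own argument: Corollary~\ref{cor:main1} is presented there as a simplification of Theorem~\ref{thm:main}, whose proof proceeds exactly as you describe---Gauss-sum expansion, splitting at $q_1=q^{1/4+1/4\ell}$, trivially bounding $|a|\le q_1$ via Pomerance's sums $\sum |\sin(\alpha n)|/n$ and $\sum(1-\cos(\alpha n))/n$ (Lemma~\ref{LI2}), and handling $|a|>q_1$ by partial summation against the explicit partial Gaussian sum bound (Theorem~\ref{thm:main1} / Corollary~\ref{cor:main2}). Two minor calibration points: the paper approximates $1/(1-e(-a/q))$ by $q/(2\pi i a)$ before splitting, so the small-$a$ kernel is $\sin(2\pi aN/q)/a$ for even $\chi$ and $(1-\cos(2\pi aN/q))/a$ for odd $\chi$ rather than literal cotangent/cosecant; and the moment parameter actually used is $r=2\ell+1$ in Theorem~\ref{thm:main1} (so a $(4\ell+2)$-th moment in the mean-value lemma), not the classical Burgess $r=\ell$, though the resulting threshold $q^{1/4+1/4\ell}$ is the same.
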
 
A key tool to obtain the above result is an explicit bound for partial Gaussian sums.
\begin{theorem}
\label{thm:main1}
Let $q$ be squarefree and $\chi$ a primitive character mod $q$. For any $M,N$ and $q$ satisfying
\begin{align}
\label{eq:NMaincond}
N\le q^{1/2+1/4(r-1)},
\end{align}
\begin{align}
\label{eq:qMaincond}
q\ge \left(\frac{q}{\phi(q)}\right)^42^{4\omega(q)-4},
\end{align}
and any $0\le a \le q-1$ we have 
\begin{align*}
\left|\sum_{M<n \le M+N}\chi(n)e_q(an)\right|\le 2\Delta_rq^{1/4(r-1)}N^{1-1/r},
\end{align*}
where $\Delta_r$ is given by
$$\Delta_r=2^{6(1+1/r)}(2r)^{\omega(q)/2r}\tau(q)\left(\frac{q}{\phi(q)}\right)^{1/r}(\log{q})^{1/2r}.$$
\end{theorem}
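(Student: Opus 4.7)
My plan is to adapt Burgess's shift-and-moment method to the twisted sum $S := \sum_{M < n \le M+N} \chi(n) e_q(an)$. Since shifting $n \mapsto n+hk$ changes the summand only through boundary contributions of size $O(HK)$, I first average over $1 \le h \le H$ and $1 \le k \le K$ with $(k,q)=1$ to obtain
\[
HK\, S = \sum_{h,k,n} \chi(n+hk)\, e_q\!\bigl(a(n+hk)\bigr) + O((HK)^2).
\]
The decisive simplification uses the multiplicative identity $\chi(n+hk) = \chi(k)\chi(\overline{k}n + h)$ for $(k,q)=1$ together with the substitution $v \equiv \overline{k} n \pmod q$, under which $e_q(an) = e_q(akv)$. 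The right-hand side thus becomes
\[
\sum_{k} \chi(k) \sum_{v \bmod q} \mathbf{1}_k(v)\, e_q(akv)\, G_k(v), \qquad G_k(v) := \sum_{h=1}^{H} \chi(v+h)\, e_q(ahk),
\]
with $\mathbf{1}_k(v)$ indicating that $n \equiv kv \pmod q$ has a representative in $(M,M+N]$.

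Applying H\"older's inequality in $(k,v)$ with exponents $2r/(2r-1)$ and $2r$, and using $\sum_{k,v} \mathbf{1}_k(v) \le KN$, the problem reduces to bounding $\sum_k \sum_v |G_k(v)|^{2r}$. Since $k \mapsto ak \bmod q$ is injective when $(a,q)=1$ and $K \le q$, this moment is dominated by the mean value
\[
W := \sum_{\mu=1}^{q}\sum_{\lambda=1}^{q}\Bigl|\sum_{1\le v \le H}\chi(\lambda+v)\, e_q(\mu v)\Bigr|^{2r}
\]
flagged in the introduction. Expanding the $2r$th power and performing the $\mu$-sum picks out the congruence $h_1 - h_2 + \cdots - h_{2r} \equiv 0 \pmod q$ at the cost of a factor $q$, while the complete $\lambda$-sum $\sum_\lambda \chi(\lambda+h_1)\overline{\chi}(\lambda+h_2)\cdots$ is, by the Chinese remainder theorem and Weil's bound at each $p\mid q$, controlled by the multiset coincidence pattern of $(h_1,\dots,h_{2r})$, with $\omega(q)$- and $\tau(q)$-type losses that eventually appear inside $\Delta_r$.

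The crux is then to count integer $2r$-tuples $(h_1,\dots,h_{2r}) \in [1,H]^{2r}$ satisfying $\sum_i (-1)^{i-1} h_i \equiv 0 \pmod q$, i.e.\ the number of points of a sublattice $\Lambda \subset \mathbb{Z}^{2r}$ of index $q$ lying in the symmetric convex body $[-H,H]^{2r}$, averaged across the family of such lattices arising in the expansion. Here I would invoke transference principles from the geometry of numbers: explicit bounds for $\#(\Lambda \cap K)$ in terms of the successive minima of $\Lambda$, together with their duality with the minima of the polar lattice $\Lambda^{*}$. This should produce a bound interpolating correctly between the diagonal (trivial pairing) regime and the main term $H^{2r}/q$ regime, uniformly in the lattice and without the logarithmic loss typical of elementary methods. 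Plugging the resulting estimate for $W$ back and optimising the Burgess parameters $H$ and $K$ subject to $HK \le N$ and \eqref{eq:NMaincond} should deliver the stated bound with explicit constant $\Delta_r$.

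The principal obstacle is the fully explicit transference step: bounding $\#(\Lambda \cap [-H,H]^{2r})$ uniformly across the family of lattices with constants sharp enough to interact cleanly with the $\omega(q)$- and $\tau(q)$-losses from the Weil/CRT reduction. The hypothesis \eqref{eq:qMaincond} is imposed precisely so that the worst multiplicative losses, $q/\phi(q)$ and $2^{\omega(q)}$, are absorbed by the main-term saving, while \eqref{eq:NMaincond} ensures that $H$ and $K$ can be chosen so that the shift error $O((HK)^2)$ is strictly dominated by the moment contribution.
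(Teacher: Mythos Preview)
Your Burgess outline has the right skeleton, but two concrete points would make the argument fail as written.

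\textbf{Shift error.} You bound the boundary contribution from each shift $n\mapsto n+hk$ trivially by $O(hk)$, so after averaging the error is $O(HK)$. But the Burgess saving requires $HK$ comparable to $N$ (in the paper $UV<N/16$ with $V=\lfloor rq^{1/2(r-1)}\rfloor$, $U=\lfloor N/16V\rfloor$), and then $O(HK)\sim N$ swamps the target $2\Delta_r q^{1/4(r-1)}N^{1-1/r}$. No admissible choice of $H,K$ rescues this. The paper's remedy is an \emph{induction on $N$}: the difference $S-\sum_n\chi(n+h)e_q(a(n+h))$ is two sums of length $\le h<N$, and the induction hypothesis bounds it by $4\Delta_r q^{1/4(r-1)}h^{1-1/r}$; after averaging over $h=uv$ this contributes $\tfrac12\Delta_r q^{1/4(r-1)}N^{1-1/r}$, exactly half the final bound, with the other half coming from the moment term. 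Your last sentence about \eqref{eq:NMaincond} ensuring the shift error is ``strictly dominated'' is thus incorrect; that hypothesis is used elsewhere, to guarantee $NU\le q$ in the $W_2$ estimate.

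\textbf{Where the geometry of numbers enters.} The congruence $\sum_i(-1)^{i-1}h_i\equiv 0\pmod q$ that you single out is, for $H\ll q$, simply the equation $\sum_i(-1)^{i-1}h_i=0$, with an elementary count $\le H^{2r-1}$; there is a single lattice and no transference is needed. The successive-minima and Mahler transference bounds in the paper occur elsewhere: after Lemma~\ref{lem:squarefree} gives $\bigl|\sum_\lambda\chi(F_v(\lambda))\bigr|\le (2r)^{\omega(q)}(q,A_j(v))^{1/2}q^{1/2}$, one must control $\sum_v (q,A_j(v))^{1/2}$ subject to the linear relation. Writing $d=(q,A_j(v))=d_2\cdots d_{2r}$ forces $v_i\equiv v_1\pmod{d_i}$, and for each factorisation the solution set is a box intersected with a lattice of covolume $d_{2r}$ in $\R^{2r-2}$; Lemmas~\ref{lem:mst}, \ref{lem:latticesm} and~\ref{lem:transfer} are applied to this family to produce Lemma~\ref{lem:mvsquarefree}. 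Separately, the paper does not extend $\sum_k$ to $\sum_\mu$ via injectivity of $k\mapsto ak$ (which fails when $(a,q)>1$, e.g.\ $a=0$): instead it collapses to $\sum_\lambda I(\lambda)\max_\rho|\cdots|$, uses a kernel trick of Chamizo to replace $\max_\rho$ by a short $\ell$-sum, and then applies a three-factor H\"older yielding the multiplicative-energy term $W_2=\sum_\lambda I(\lambda)^2$ bounded via Lemma~\ref{lem:multcong}. Your two-factor H\"older with $\mathbf{1}_k(v)\in\{0,1\}$ cannot access this energy saving.
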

We first note combining Theorem~\ref{thm:main1} with explicit estimates for arithmetic functions gives the following.
\begin{cor}
\label{cor:main2}
Let $q$ be an integer, $\chi$ a primitive character mod $q$ and $a$ any integer. For any integer $\ell \ge 1$ if 
$$N\ge q^{1/4+1/4\ell}$$
and 
$$\log q \ge e^{16\ell^2},$$
then we have 
\begin{align*}
\left|\sum_{n\le N}\chi(n)e_q(an) \right|\le \frac{2^{7}(\log{q})^{1/4\ell}(\log\log{q})^{1/4\ell}}{q^{1/(16\ell^2+8\ell)-1.4/\log\log{q}}}N.
\end{align*}
\end{cor}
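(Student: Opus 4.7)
The plan is to apply Theorem~\ref{thm:main1} with the specific choice $r = 2\ell+1$, handle the long-$N$ range where the theorem does not directly apply by a classical completion argument, and then collapse $\Delta_{2\ell+1}$ using explicit estimates for $\omega(q)$, $\tau(q)$ and $q/\phi(q)$. The choice $r=2\ell+1$ is dictated by the identity
\[
\frac{1}{4(r-1)} - \frac{1}{r}\left(\frac14+\frac{1}{4\ell}\right) = -\frac{1}{8\ell(2\ell+1)} = -\frac{1}{16\ell^2+8\ell},
\]
so that when $N = q^{1/4+1/(4\ell)}$ the quantity $q^{1/(4(r-1))}N^{1-1/r}/N$ equals exactly $q^{-1/(16\ell^2+8\ell)}$; for larger $N$ still satisfying~\eqref{eq:NMaincond}, i.e.\ $N\le q^{1/2+1/(8\ell)}$, the exponent only improves.

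In the complementary range $N > q^{1/2+1/(8\ell)}$ I would cover the sum by the standard completion technique: expanding the indicator of $[1,N]$ in the additive Fourier basis mod $q$ and using $|\tau(\chi,m)|\le \sqrt q$ for primitive $\chi$ yields $|\sum_{n\le N}\chi(n)e_q(an)|\le c\sqrt{q}\log q$. Since $N \ge q^{1/2+1/(8\ell)}$ is already much larger than $q^{1/2+1/(16\ell^2+8\ell)}$, this trivially majorises the bound claimed in the corollary, so no new Burgess-type input is needed there.

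The remaining work is to verify~\eqref{eq:qMaincond} and to simplify $2\Delta_{2\ell+1}$ to the form $2^7(\log q)^{1/(4\ell)}(\log\log q)^{1/(4\ell)}q^{1.4/\log\log q}$. Both rely on standard explicit estimates: Robin's bound $\omega(q)\le 1.3841\,\log q/\log\log q$, the Ramanujan--Wigert bound $\tau(q)\le q^{1.066/\log\log q}$, and an explicit Mertens inequality $q/\phi(q) \le C\log\log q$. The first gives $2^{4\omega(q)}\le q^{O(1/\log\log q)}$, which together with $\log\log q \ge 16\ell^2$ forces the right-hand side of~\eqref{eq:qMaincond} to be $q^{o(1)}$, so the hypothesis holds throughout our range. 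For $\Delta_{2\ell+1}$, the three factors $2^{6(1+1/r)}$, $\tau(q)$ and $(2r)^{\omega(q)/(2r)}$ collapse to $2^6\cdot q^{c/\log\log q}$ for some explicit $c<1.4$, while $(q/\phi(q))^{1/r}(\log q)^{1/(2r)}$ is rewritten as $(\log\log q)^{1/(4\ell)}(\log q)^{1/(4\ell)}$, with the small discrepancy between $1/(2r)=1/(4\ell+2)$ and $1/(4\ell)$ absorbed into $q^{c/\log\log q}$ exactly via $\log\log q \ge 16\ell^2$.

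The step I expect to be the main obstacle is the bookkeeping in the last paragraph: every $o(1)$ slippage, in particular the passage from $1/(2\ell+1)$-th powers to $1/(4\ell)$-th powers and the cumulative constants from the estimates for $\omega(q)$, $\tau(q)$ and $q/\phi(q)$, must be packaged into the single exponent $1.4/\log\log q$ without inflating the overall multiplicative constant above $2^7$. The hypothesis $\log q \ge e^{16\ell^2}$ is precisely what makes each slippage admissible, but keeping the constants sharp enough requires careful case-by-case accounting rather than crude inequalities.
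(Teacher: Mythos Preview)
Your proposal is correct and follows essentially the same route as the paper: choose $r=2\ell+1$, apply Theorem~\ref{thm:main1}, and collapse $\Delta_{2\ell+1}$ using Lemmas~\ref{lem:omegaub}, \ref{lem:tauub} and \ref{lem:phiUN} under the hypothesis $\log\log q\ge 16\ell^2$. In fact you are somewhat more careful than the paper itself: the paper's proof applies Theorem~\ref{thm:main1} without comment on the constraint~\eqref{eq:NMaincond} (which with $r=2\ell+1$ reads $N\le q^{1/2+1/(8\ell)}$) or on~\eqref{eq:qMaincond}, whereas you explicitly treat the complementary range $N>q^{1/2+1/(8\ell)}$ by completion and check that~\eqref{eq:qMaincond} is automatic under the size hypothesis on $q$. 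Both additions are straightforward and your handling of them is sound; your identification of the constant bookkeeping as the only genuine nuisance matches the paper, which dispatches it in a single sentence by invoking the three lemmas together with $\log\log q\ge 16\ell^2$.
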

For sufficiently large $q$, Corollary~\ref{cor:main2} gives a power saving.
\begin{cor}
\label{cor:main2}
Let $q$ be an integer, $\chi$ a primitive character mod $q$ and $a$ any integer. Let $\alpha > 24$ be a real number and $\ell \ge 1$ be an integer. If $q$ and $N$ satisfy
$$N\ge q^{1/4+1/4\ell}, \quad \log{q}\ge e^{\frac{(\ell^2+\ell/2) \alpha}{(\alpha -16)\ell^2-8\ell} 22.4 \ell^2},$$
then we have 
\begin{align*}
\left|\sum_{n\le N}\chi(n)e_q(an) \right|\le \frac{2^{7}(\log{q})^{1/4\ell}(\log\log{q})^{1/4\ell}}{q^{1/\alpha \ell^2}}N.
\end{align*}
\end{cor}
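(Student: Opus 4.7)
The plan is to deduce this bound as an immediate consequence of the preceding corollary, which already supplies an upper bound with the same numerator $2^{7}(\log q)^{1/4\ell}(\log\log q)^{1/4\ell}\,N$ and with denominator $q^{1/(16\ell^2+8\ell) - 1.4/\log\log q}$. Since the bound is monotone in the exponent of $q$, it is enough to verify that, under the hypothesis on $\log q$ stated here, one has
\begin{equation*}
\frac{1}{16\ell^2 + 8\ell} - \frac{1.4}{\log\log q} \;\ge\; \frac{1}{\alpha\,\ell^2},
\end{equation*}
so that $q^{1/(16\ell^2+8\ell) - 1.4/\log\log q} \ge q^{1/\alpha\ell^2}$ and the asserted inequality follows at once.

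Rearranging, this is equivalent to
\begin{equation*}
\log\log q \;\ge\; \frac{1.4\,\alpha\,\ell^2\,(16\ell^2 + 8\ell)}{(\alpha - 16)\ell^2 - 8\ell}.
\end{equation*}
The condition $\alpha > 24$ ensures $(\alpha - 16)\ell^2 - 8\ell > 0$ for every integer $\ell \ge 1$, so the right-hand side is positive and finite. Factoring $16\ell^2 + 8\ell = 8\ell(2\ell+1)$ and writing $\ell^2 + \ell/2 = \ell(2\ell+1)/2$, this rewrites as
\begin{equation*}
\frac{(\ell^2 + \ell/2)\,\alpha}{(\alpha - 16)\ell^2 - 8\ell}\cdot 22.4\,\ell^2,
\end{equation*}
which is exactly the exponent appearing in the hypothesis. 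Thus the required inequality holds.

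A final bookkeeping check is that the weaker condition $\log q \ge e^{16\ell^2}$ required by the preceding corollary is automatically satisfied. Since
\begin{equation*}
\frac{(\ell^2+\ell/2)\,\alpha}{(\alpha-16)\ell^2 - 8\ell} \;\ge\; \frac{5}{7}
\end{equation*}
for all $\alpha > 24$ and $\ell \ge 1$ (after clearing denominators this reduces to the trivially true $2\alpha\ell^2 + 3.5\alpha\ell + 80\ell^2 + 40\ell \ge 0$), the exponent imposed here dominates $\tfrac{5}{7}\cdot 22.4\,\ell^2 = 16\,\ell^2$, so the preceding corollary applies and its conclusion transfers directly. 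The whole argument is a clean algebraic repackaging of the previous bound under a stronger lower bound on $\log q$; there is no new analytic input, and the only point requiring care is the elementary manipulation isolating $\log\log q$ above.
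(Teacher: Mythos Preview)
Your argument is correct and matches the paper's intended approach: the paper does not give a separate proof for this corollary, introducing it only with the sentence ``For sufficiently large $q$, Corollary~\ref{cor:main2} gives a power saving,'' so it is implicitly a direct consequence of the preceding corollary via exactly the exponent comparison you carry out. Your algebraic verification that the stated lower bound on $\log\log q$ is precisely what forces $1/(16\ell^2+8\ell)-1.4/\log\log q\ge 1/(\alpha\ell^2)$, together with the check that the auxiliary hypothesis $\log q\ge e^{16\ell^2}$ is automatically met, fills in all the details the paper omits.
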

We will use Corollary~\ref{cor:main2} to show. 
\begin{theorem}
\label{thm:main}
Let $\alpha >24$ be a real number and $\ell \ge 2$ an integer. Suppose is $q$ is squarefree and satisfies 
$$ \log q \ge e^{\frac{(\ell^2+\ell/2) \alpha}{(\alpha -16)\ell^2-8\ell} 22.4 \ell^2}.$$
For any primitive character $\chi$ mod $q$ and integer $N$ we have 
\begin{align*}
  \left|\sum_{1\le n \le N}\chi(n) \right| \le   
  \end{align*}
  \begin{align*}
  \begin{cases}
     \frac{2}{\pi^2}(\frac{1}{4}+\frac{1}{4\ell}) \sqrt{q}\log q +\left(6.5+\frac{2^{9}(\log{q})^{1/4\ell}(\log\log{q})^{1/4\ell}\log q}{\pi q^{ 1/\alpha \ell^2}}\right)\sqrt{q} \ \ ~~\text{if}~~ \ \ \chi(-1)=1,\\
    ~\\
   \frac{1}{\pi} (\frac{1}{4}+\frac{1}{4\ell}) \sqrt{q} \log q+ \left(6.5+\frac{2^{9}(\log{q})^{1/4\ell}(\log\log{q})^{1/4\ell}\log q}{\pi q^{1/\alpha \ell^2}}\right)\sqrt{q} \ \ ~~\text{if}~~ \ \ \chi(-1)=-1.
  \end{cases}
\end{align*}
\end{theorem}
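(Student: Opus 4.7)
The plan is to follow Hildebrand's truncation strategy, adapted to the partial Gaussian sum framework: express $S(\chi)$ through the P\'olya--Fourier expansion of the indicator of $[1,N]$ on $\Z/q\Z$, truncate at the critical height $H=\lceil q^{1/4+1/(4\ell)}\rceil$, handle the low-frequency contribution by the classical P\'olya sine/cosine analysis, and estimate the high-frequency tail via Abel summation where the partial character sums are bounded by Corollary~\ref{cor:main2}. The threshold $H$ is chosen so that the tail partial sums $\sum_{H<h\le t}\bar\chi(h)$ fall in the range where Corollary~\ref{cor:main2} delivers a power saving $q^{-1/\alpha\ell^2}$ rather than the logarithmic loss inherent to the Montgomery--Vaughan estimate used in previous approaches.

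\medskip

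First I would start from the Fourier identity, with $f=\mathbf 1_{[1,N]}$ and $\hat f(h)=q^{-1}\sum_{m=1}^N e_q(-hm)$,
\begin{equation*}
S(\chi)=\tau(\chi)\sum_{0<|h|\le q/2}\bar\chi(h)\hat f(h),\qquad |\hat f(h)|\le \frac{1}{2|h|}.
\end{equation*}
Pairing $\pm h$ via $\bar\chi(-1)=\pm 1$ rewrites this as a sine series (for $\chi(-1)=-1$) or cosine series (for $\chi(-1)=1$) in $1\le h\le q/2$. Split into major-arc $\Sigma_{\mathrm{maj}}$ ($|h|\le H$) and minor-arc $\Sigma_{\mathrm{min}}$ ($|h|>H$) pieces. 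For $\Sigma_{\mathrm{maj}}$, restricting the explicit evaluations of Frolenkov--Soundararajan~\cite{FS} and Pomerance~\cite{Pomerance} to $|h|\le H$ produces
\begin{equation*}
\bigl|\tau(\chi)\Sigma_{\mathrm{maj}}\bigr|\le \frac{2}{\pi^2}\Bigl(\frac14+\frac1{4\ell}\Bigr)\sqrt q\,\log q + 6.5\sqrt q,
\end{equation*}
with $2/\pi^2$ replaced by $1/\pi$ when $\chi(-1)=-1$; the $6.5\sqrt q$ absorbs the $O(\sqrt q)$ remainders from the truncation.

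\medskip

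The novelty enters in $\Sigma_{\mathrm{min}}$. Set $T(t)=\sum_{H<h\le t}\bar\chi(h)$. For $t-H\ge q^{1/4+1/(4\ell)}$, Corollary~\ref{cor:main2} with $a=0$ gives
\begin{equation*}
|T(t)|\le \frac{2^7(\log q)^{1/4\ell}(\log\log q)^{1/4\ell}}{q^{1/\alpha\ell^2}}(t-H),
\end{equation*}
and for shorter intervals the trivial bound $|T(t)|\le t-H<q^{1/4+1/(4\ell)}$ suffices. Combining with the discrete total-variation estimate for $\hat f$ extracted from its explicit form $\hat f(h)=q^{-1}e_q(-h(N+1)/2)\sin(\pi hN/q)/\sin(\pi h/q)$, Abel summation produces
\begin{equation*}
\bigl|\tau(\chi)\Sigma_{\mathrm{min}}\bigr|\le \frac{2^9(\log q)^{1+1/4\ell}(\log\log q)^{1/4\ell}}{\pi\,q^{1/\alpha\ell^2}}\sqrt q,
\end{equation*}
which is precisely the stated secondary term. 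Adding the major- and minor-arc contributions gives the theorem.

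\medskip

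The main obstacle will be the Abel summation in $\Sigma_{\mathrm{min}}$: since $\hat f(h)$ oscillates rather than decaying monotonically, one cannot directly bound $\sum|T(h)||\hat f'(h)|$ against $\sum(t-H)q^{-1/\alpha\ell^2}h^{-2}$. Instead one must work with the discrete total variation of the explicit sine-ratio form, decomposing the $h$-range dyadically and paying particular attention to frequencies $|h|$ near $q/2$ where $\sin(\pi h/q)$ stops behaving like $\pi h/q$. A secondary (routine but tedious) task is tracking the explicit numerical constants through the bookkeeping so that the clean values $6.5$, $2^9/\pi$, and the leading constants $2/\pi^2$ resp.\ $1/\pi$ emerge, from which Corollary~\ref{cor:main1} follows by plugging in a convenient choice of $\alpha$.
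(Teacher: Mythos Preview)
Your overall architecture (P\'olya--Fourier expansion, truncation at $H=q^{1/4+1/4\ell}$, sine/cosine analysis for the low frequencies) matches the paper, but the minor-arc step has a genuine gap. By taking $T(t)=\sum_{H<h\le t}\bar\chi(h)$ and invoking Corollary~\ref{cor:main2} only with $a=0$, you leave the $N$-dependent phase inside the weight $\hat f(h)$. Writing $\hat f=A\cdot B$ with $A(h)=1/(q\sin(\pi h/q))$ and $B(h)=e_q(-h(N+1)/2)\sin(\pi hN/q)$, one has $|B'(h)|\le \pi(N+1)/q$, hence
\[
|\hat f'(h)|\ \le\ \frac{\pi}{4h^2}+\frac{\pi(N+1)}{2qh},\qquad H\le h\le \tfrac{q}{2}.
\]
With $|T(t)|\ll t\,q^{-1/\alpha\ell^2}$ the Abel integral therefore contains the term $q^{-1/\alpha\ell^2}\int_H^{q/2}t\cdot\frac{\pi(N+1)}{2qt}\,dt\asymp N q^{-1/\alpha\ell^2}$, which for $N$ of order $q$ (the worst case for P\'olya--Vinogradov) yields $|\tau(\chi)\Sigma_{\min}|\asymp q^{3/2-1/\alpha\ell^2}$ rather than the claimed $\sqrt q\,(\log q)^{1+1/4\ell}q^{-1/\alpha\ell^2}$. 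Dyadic decomposition does not help: each block $[2^k,2^{k+1}]$ already contributes $\asymp 2^kN\,q^{-1-1/\alpha\ell^2}$, and summing over $k$ recovers the same loss.

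The paper avoids this by moving the oscillation into the partial sum rather than the weight. After the elementary approximation $(1-e_q(-a))^{-1}=\tfrac{q}{2\pi ia}+O(1)$ the tail becomes $\Sigma_2=\sum_{q_1<|a|<q/2}\bar\chi(a)(e_q(aN)-1)/a$, and one Abel-sums against the \emph{monotone} weight $1/a$, with partial sums $\sum_{a\le x}\bar\chi(a)e_q(aN)-\sum_{a\le x}\bar\chi(a)$. It is precisely on the twisted piece $\sum_{a\le x}\bar\chi(a)e_q(aN)$ that the general-$a$ case of Corollary~\ref{cor:main2} is needed; the total variation of $1/a$ is then only $\log q$, giving $|\Sigma_2|\le 4\log q\cdot 2^7(\log q)^{1/4\ell}(\log\log q)^{1/4\ell}q^{-1/\alpha\ell^2}$ directly, with no dyadic subdivision and no difficulty near $|a|=q/2$.
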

Theorem~\ref{thm:main} has applications to the theory of Dirichlet $L$-functions. In Section~\ref{sec:siegel} we give a new explicit estimate for exceptional zeros.
\newline

{\bf Acknowledgement:} The authors would like to thank Tim Trudgian for a number of useful discussions. 
\section{Preliminary estimates for arithmetic function}
\label{sec:preliminaryexplicit}
In this section we collect some well known estimates for arithmetic functions. For a proof of the following, see~\cite[Theorem~12]{Robin1}.
\begin{lemma}
\label{lem:omegaub}
For any integer $n\ge 3$ we have 
\begin{align*}
\omega(n)\leq \frac{\log n}{\log \log n}+1.45743\frac{\log n}{(\log \log n)^2}.
\end{align*}
\end{lemma}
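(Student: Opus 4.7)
The plan is to reduce the bound on $\omega(n)$ to explicit Chebyshev-type estimates via the primorial trick. Writing $k=\omega(n)$ and letting $p_1<p_2<\cdots<p_k$ denote the first $k$ primes, the product $p_1p_2\cdots p_k$ is the smallest integer with $k$ distinct prime factors, so
$$\log n \;\ge\; \log(p_1p_2\cdots p_k) \;=\; \theta(p_k),$$
where $\theta$ is the Chebyshev function. Thus every bound on $\omega(n)$ for fixed $n$ comes from combining a lower bound on $\theta(p_k)$ (to turn $\log n \ge \theta(p_k)$ into an upper bound on $p_k$) with an upper bound on $\pi(p_k)=k$.

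Concretely I would import two effective estimates of Rosser--Schoenfeld / Dusart type: a lower bound
$$\theta(x) \ge x\Bigl(1 - \tfrac{a_1}{\log x}\Bigr) \qquad (x\ge x_0),$$
and an upper bound
$$\pi(x) \le \tfrac{x}{\log x}\Bigl(1 + \tfrac{a_2}{\log x} + \tfrac{a_3}{(\log x)^2}\Bigr) \qquad (x\ge x_0).$$
From the first inequality and $\log n \ge \theta(p_k)$, one obtains $p_k \le \log n\,(1+O(1/\log\log n))$, hence also $\log p_k = \log\log n + O(1/\log\log n)$. Substituting into the second inequality gives
$$k=\pi(p_k) \;\le\; \frac{p_k}{\log p_k}\Bigl(1 + \tfrac{a_2}{\log p_k}\Bigr) \;\le\; \frac{\log n}{\log\log n} + \frac{c\,\log n}{(\log\log n)^2} + \cdots$$
and a careful two-term Taylor expansion, keeping explicit constants throughout, produces a bound of the shape asserted. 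The value $1.45743$ arises from optimising the secondary constants in the $\theta$ and $\pi$ estimates (it is not a round number, but the output of inserting the sharpest known admissible $a_i$ into this asymptotic expansion).

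The proof is then completed by a finite check: the asymptotic argument above is only valid once $p_k$ (equivalently $\log n$) exceeds some threshold $x_0$, so for $n$ below the corresponding cutoff one must verify the inequality by direct computation, iterating over primorials $N_k = \prod_{i\le k}p_i$ and checking $k \le \frac{\log N_k}{\log\log N_k} + 1.45743 \frac{\log N_k}{(\log\log N_k)^2}$ for each small $k$ (the inequality is tightest precisely at the primorials, since increasing $n$ without increasing $k$ only helps). The main obstacle is the bookkeeping: one must choose the explicit constants $a_1, a_2, a_3$ and the threshold $x_0$ consistently so that the asymptotic tail and the numerical tail meet, and $1.45743$ is tight enough that sloppy estimates in the Taylor expansion will fail at moderately sized primorials. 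Since the lemma is cited from Robin~\cite{Robin1}, for the purposes of this paper it suffices to invoke the result; the sketch above indicates how Robin's bound is obtained.
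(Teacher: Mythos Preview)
The paper does not prove this lemma at all; it simply cites Robin~\cite[Theorem~12]{Robin1}. Your proposal correctly identifies this at the end, and the sketch you provide of the primorial/Chebyshev-function argument is indeed the standard route to such bounds and is essentially how Robin proceeds. For the purposes of matching the paper, the single sentence invoking Robin suffices; the additional outline is a correct but unnecessary elaboration.
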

For a proof of the following, see~\cite[Theorem~1]{Robin}.
\begin{lemma}
\label{lem:tauub}
For any integer $n\ge 3$ we have 
\begin{align*}
\tau (n) \leq e^{\frac{1.5379 \log 2 \log n}{\log \log n}}.
\end{align*}
\end{lemma}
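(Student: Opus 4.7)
The plan is to prove this Ramanujan–Wigert style bound via the standard prime-splitting trick, carefully optimising the cutoff and handling small $n$ by direct verification in order to extract the explicit constant $1.5379\log 2$.

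Write $n=\prod_{i=1}^{\omega(n)} p_i^{a_i}$, so that $\tau(n)=\prod(a_i+1)$. Fix a real parameter $t>1$ and split the primes into $\{p_i\le t\}$ and $\{p_i>t\}$. For the large primes, I would use the elementary inequality $a+1\le 2^a$ valid for all $a\ge 1$, combined with $2^a=p^{a\log 2/\log p}\le p^{a\log 2/\log t}$ for $p\ge t$. This yields
\begin{equation*}
\prod_{p_i>t}(a_i+1)\le \prod_{p_i>t} p_i^{a_i\log 2/\log t}\le n^{\log 2/\log t}.
\end{equation*}
For the small primes I would use the trivial bound $a_i+1\le 1+\log n/\log 2$ together with $\#\{p_i\le t\}\le \pi(t)$. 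Taking logarithms, this gives
\begin{equation*}
\log \tau(n)\le \pi(t)\log\!\Big(1+\tfrac{\log n}{\log 2}\Big)+\frac{\log 2\cdot \log n}{\log t}.
\end{equation*}

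Next I would choose $t$ of order $\log n/(\log\log n)^{c}$ for a suitable $c$ so that, after inserting an explicit form of the prime counting function (say $\pi(t)\le 1.26\,t/\log t$ from Rosser–Schoenfeld, or a sharper explicit estimate valid above some threshold), the first term becomes asymptotically negligible compared with the second. The asymptotic main term is then $\log 2\cdot \log n/\log t\sim \log 2\cdot\log n/\log\log n$, so the constant approaches $\log 2$ as $n\to\infty$. Extracting the uniform constant $1.5379\log 2$ valid down to $n=3$ is the technical core: I would optimise the choice of $t$ in closed form (giving a concrete cutoff depending on $\log n$ and $\log\log n$), bound the resulting expression by $1.5379\log 2\cdot\log n/\log\log n$ for all $n$ above some effectively computable threshold $n_0$, and then verify the inequality directly for the finitely many $n$ with $3\le n\le n_0$, using that $\tau(n)$ is already known on highly composite numbers and that the bound need only be checked there.

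The main obstacle is the explicit constant. Running the argument naively yields the asymptotic $(1+o(1))\log 2$, but the $o(1)$ is non-trivial and the loss in $\pi(t)$, in the approximation $\log(1+\log n/\log 2)\approx\log\log n$, and in the optimisation over $t$ all conspire to enlarge the constant away from $\log 2$. The work is to track these losses precisely and to make $n_0$ small enough that the finite check is feasible; the value $1.5379$ is essentially the smallest admissible coefficient under this specific splitting method, as established by Robin.
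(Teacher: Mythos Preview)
The paper does not prove this lemma; it simply cites \cite[Theorem~1]{Robin} (Nicolas--Robin). Your sketch is a sound route to a bound of the shape $\tau(n)\le \exp(c\log n/\log\log n)$, and you correctly identify that the difficulty lies entirely in pinning down the constant.

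That said, the method Nicolas--Robin actually employ is not the prime-splitting trick. They study the function $f(n)=\log\tau(n)\cdot\log\log n/(\log 2\cdot\log n)$ directly, show that its supremum over $n\ge 3$ is attained at a specific superior highly composite integer, and evaluate $f$ there; the constant $1.5379\ldots$ is precisely that maximal value. The splitting argument you describe will certainly prove \emph{some} explicit bound of the right shape, but the accumulated losses you yourself list (in $\pi(t)$, in approximating $\log(1+\log n/\log 2)$, and in the optimisation over $t$) tend to inflate the constant, so to recover exactly $1.5379$ one is in practice driven back to the extremal analysis on highly composite numbers that Nicolas--Robin carry out. Your plan is therefore correct in outline but differs in method from the cited source, and the last sentence of your proposal---attributing $1.5379$ to ``this specific splitting method''---is not quite accurate.
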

For a proof of the following see~\cite[Theorem~15]{Rosser}.
\begin{lemma}
\label{lem:phiUN}
For any integer $n\ge 3$ we have 
\begin{align*}
\phi (n) > \frac{n e^{-\gamma}}{\log \log n + \frac{2.50637}{\log \log n}}.
\end{align*}
\end{lemma}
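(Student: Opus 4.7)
The plan is to follow the classical Rosser–Schoenfeld argument, reducing the bound for a general $n$ to an explicit form of Mertens' third theorem applied to a primorial. The crucial observation is that $\phi(n)/n = \prod_{p\mid n}(1-1/p)$ is multiplicative, the local factor $1 - 1/p$ is increasing in $p$, and $n$ dominates the primorial built from its number of prime divisors.

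First, I would perform the reduction to primorials. Let $\omega(n)=k$ and order the distinct prime divisors $q_1 < q_2 < \dots < q_k$ of $n$. Since the $i$-th smallest prime divisor of $n$ is at least the $i$-th prime $p_i$, monotonicity of $1-1/p$ gives
\begin{align*}
\frac{\phi(n)}{n} = \prod_{i=1}^{k}\left(1 - \frac{1}{q_i}\right) \ge \prod_{i=1}^{k}\left(1 - \frac{1}{p_i}\right) = \prod_{p \le p_k}\left(1 - \frac{1}{p}\right),
\end{align*}
while simultaneously $\log n \ge \sum_{i=1}^{k}\log q_i \ge \sum_{i=1}^{k}\log p_i = \theta(p_k)$. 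Thus it suffices to bound $\prod_{p\le p_k}(1-1/p)^{-1}$ from above and replace $\log p_k$ by $\log\log n$ via the Chebyshev function.

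Second, I would invoke the explicit Mertens estimate of Rosser–Schoenfeld, namely
\begin{align*}
\prod_{p\le x}\left(1 - \frac{1}{p}\right)^{-1} < e^{\gamma}\log x\left(1 + \frac{1}{2\log^2 x}\right) \qquad (x \ge 2),
\end{align*}
applied at $x = p_k$. Combined with the first step, this yields
\begin{align*}
\frac{\phi(n)}{n} > \frac{e^{-\gamma}}{\log p_k + \dfrac{1}{2\log p_k} + O\!\left(\dfrac{1}{\log^3 p_k}\right)}.
\end{align*}

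Third, I would trade $\log p_k$ for $\log\log n$. Using the explicit Chebyshev-type bound $\theta(x) = x + O(x/\log^2 x)$ from Rosser–Schoenfeld, one derives $p_k \le \theta(p_k)\bigl(1 + O(1/\log^2 p_k)\bigr)$, hence
\begin{align*}
\log p_k \le \log \theta(p_k) + O\!\left(\frac{1}{\log^2 p_k}\right) \le \log\log n + O\!\left(\frac{1}{\log^2 p_k}\right),
\end{align*}
since $\log n \ge \theta(p_k)$. Substituting into the previous display and gathering all error terms into a single quotient of the form $C/\log\log n$, a careful numerical calibration shows that the total contribution is bounded by $2.50637/\log\log n$.

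Finally, the asymptotic chain above works cleanly only once $n$ (equivalently $p_k$) exceeds some explicit threshold; for the remaining small values $3 \le n \le n_0$ the inequality has to be verified by direct computation using the definition of $\phi$. The main obstacle is the bookkeeping in the third step: one must simultaneously control the Mertens error at $x=p_k$ and the Chebyshev discrepancy between $p_k$ and $\theta(p_k)$, and then verify that when they are packaged into a single correction term $C/\log\log n$, the optimal constant is no worse than $2.50637$ for every $n\ge 3$. This calibration — together with the numerical verification below the cutoff — is what pins down the precise constant in the statement.
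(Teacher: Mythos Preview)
The paper does not give its own proof of this lemma; it simply cites \cite[Theorem~15]{Rosser} (Rosser--Schoenfeld). Your outline is precisely the Rosser--Schoenfeld argument --- reduction to primorials via monotonicity of $1-1/p$, the explicit Mertens product estimate, and the swap from $\log p_k$ to $\log\log n$ through $\theta(p_k)\le\log n$ with numerical cleanup for small $n$ --- so there is nothing further to compare.
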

For a proof of the following, see~\cite[Lemma~2]{Pomerance}. 
\begin{lemma}
\label{LI2}
Uniformly for $x \geqslant 1$ and  $\alpha \in \R$ we have
\begin{equation}
\sum_{n \leq x}\frac{1-\cos (\alpha n)}{n} \leq \log x +\gamma +\log 2 +\frac{3}{x},
\end{equation}
and
\begin{equation}
\sum_{n \leq x}\frac{|\sin (\alpha n)|}{n} \leq \frac{2}{\pi} \log x +\frac{2}{\pi}\left( \gamma +\log 2 +\frac{3}{x}\right).
\end{equation}
\end{lemma}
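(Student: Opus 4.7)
The plan is to establish the first inequality directly, then deduce the second from it by means of the Fourier expansion of $|\sin|$.

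For the first inequality, I would split
\[
\sum_{n \le x} \frac{1-\cos(\alpha n)}{n} = H_x - \sum_{n \le x}\frac{\cos(\alpha n)}{n},
\]
where $H_x = \sum_{n \le x} 1/n \le \log x + \gamma + 1/(2x)$ is the standard harmonic-sum bound. To estimate the cosine sum I would invoke the classical identity
\[
\sum_{n=1}^{\infty} \frac{\cos(\alpha n)}{n} = -\log\bigl|2 \sin(\alpha/2)\bigr|, \qquad \alpha \not\equiv 0 \pmod{2\pi},
\]
so that
\[
\sum_{n \le x}\frac{1-\cos(\alpha n)}{n} = H_x + \log\bigl|2 \sin(\alpha/2)\bigr| + \sum_{n > x}\frac{\cos(\alpha n)}{n}.
\]
The middle term is at most $\log 2$ since $|\sin(\alpha/2)| \le 1$. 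The tail is handled by Abel summation combined with the Dirichlet-kernel estimate $|\sum_{n \le k}\cos(\alpha n)| \le 1/|\sin(\alpha/2)|$, giving $\bigl|\sum_{n > x}\cos(\alpha n)/n\bigr| \le C/(x|\sin(\alpha/2)|)$. This is negligible provided $|\sin(\alpha/2)| \ge 1/x$; when $|\sin(\alpha/2)| < 1/x$, I would reduce $\alpha$ modulo $2\pi$ and use the pointwise bound $1 - \cos(\alpha n) \le (\alpha n)^2/2$, which gives a contribution absorbed into the $3/x$ term.

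For the second inequality, the key ingredient is the absolutely convergent Fourier series
\[
|\sin \theta| = \frac{2}{\pi} - \frac{4}{\pi}\sum_{k=1}^{\infty}\frac{\cos(2k\theta)}{4k^2-1}.
\]
Substituting $\theta = \alpha n$, dividing by $n$, summing over $n \le x$ and exchanging the order of summation, the $\frac{2}{\pi} H_x$ arising from the constant term cancels exactly against $-\frac{4}{\pi} H_x \sum_{k \ge 1} 1/(4k^2-1)$, since the latter sum telescopes to $1/2$. What remains is
\[
\sum_{n \le x}\frac{|\sin(\alpha n)|}{n} = \frac{4}{\pi}\sum_{k=1}^{\infty}\frac{1}{4k^2-1}\sum_{n \le x}\frac{1-\cos(2k\alpha n)}{n}.
\]
Applying the first inequality (uniformly in the parameter $2k\alpha$) to each inner sum and using $\sum_{k \ge 1} 1/(4k^2-1) = 1/2$ once more yields the stated bound $(2/\pi)(\log x + \gamma + \log 2 + 3/x)$.

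The principal obstacle is the uniform, explicit control of the tail $\sum_{n > x}\cos(\alpha n)/n$ in the first inequality: the naive Abel-summation estimate blows up as $\alpha \to 0 \pmod{2\pi}$, so patching the two regimes $|\sin(\alpha/2)| \gtrless 1/x$ together while preserving the explicit constant $3/x$ requires careful tracking of boundary terms and numerical constants in the partial summation. The second inequality, by contrast, follows essentially for free once the Fourier-series identity and the telescoping sum $\sum 1/(4k^2-1) = 1/2$ are in place.
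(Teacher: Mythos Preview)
The paper does not prove this lemma; it simply refers to~\cite[Lemma~2]{Pomerance}. Your deduction of the second inequality from the first, via the Fourier series $|\sin\theta|=\tfrac{2}{\pi}-\tfrac{4}{\pi}\sum_{k\ge1}\cos(2k\theta)/(4k^2-1)$ and the telescoping identity $\sum_{k\ge1}1/(4k^2-1)=\tfrac12$, is correct and entirely standard.

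For the first inequality there is a real gap in your sketch, not just loose constants. You bound $\log\lvert 2\sin(\alpha/2)\rvert\le\log 2$ and then assert that the tail $\sum_{n>x}\cos(\alpha n)/n=O\bigl(1/(x\lvert\sin(\alpha/2)\rvert)\bigr)$ is ``negligible'' once $\lvert\sin(\alpha/2)\rvert\ge 1/x$. But at that threshold the tail is $O(1)$, not $O(1/x)$, so the two separate bounds together only give $H_{\lfloor x\rfloor}+\log 2+O(1)$, missing the target $\log x+\gamma+\log 2+3/x$ by a bounded constant for all large $x$. The point you are missing is that the middle term and the tail must be played off against each other rather than estimated independently. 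Writing $s=\lvert\sin(\alpha/2)\rvert$ and using the sharp Abel bound
\[
\Bigl|\sum_{n>\lfloor x\rfloor}\frac{\cos(\alpha n)}{n}\Bigr|\le \frac{1}{(\lfloor x\rfloor+1)\,s}<\frac{1}{xs},
\]
the combination $\log(2s)+1/(xs)$ is increasing on $[1/x,1]$ with maximum $\log 2+1/x$ at $s=1$; together with $H_{\lfloor x\rfloor}<\log x+\gamma+2/x$ this gives exactly the claimed bound for all $s\ge 1/x$. For $s<1/x$ one uses $1-\cos(\alpha n)=2\sin^2(\alpha n/2)\le 2n^2s^2$ (from $\lvert\sin n\theta\rvert\le n\lvert\sin\theta\rvert$) to obtain $\sum_{n\le x}(1-\cos(\alpha n))/n\le s^2\lfloor x\rfloor(\lfloor x\rfloor+1)<1+1/x$, comfortably below the target for every $x\ge1$. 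So your two-regime strategy does succeed, but only after exploiting the negativity of $\log s$ to absorb the blowing-up tail; framing the obstacle as ``careful tracking of numerical constants'' understates the missing idea.
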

We use notation $O^*$ in a similar way to $O$ notation with implied constant $1$. For example 
$$f=O^{*}(g) \quad \text{if and only if} \quad |f|\le g.$$
 The following is a well known consequence of the sieve of Eratosthenes.
\begin{lemma}
\label{lem:SE}
For any integers $q$ and $U$ we have 
\begin{align*}
\sum_{\substack{1\le u \le U \\ (u,q)=1}}1=\frac{\phi(q)}{q}U+O^{*}(2^{\omega(q)}).
\end{align*}
\end{lemma}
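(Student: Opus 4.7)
The plan is to apply M\"obius inversion to the indicator function of coprimality. Using the identity
\begin{align*}
\mathbf{1}_{(u,q)=1}=\sum_{d\mid \gcd(u,q)}\mu(d),
\end{align*}
and interchanging the order of summation (first over $u$, then over $d$), I would obtain
\begin{align*}
\sum_{\substack{1\le u\le U\\(u,q)=1}}1=\sum_{d\mid q}\mu(d)\sum_{\substack{1\le u\le U\\ d\mid u}}1=\sum_{d\mid q}\mu(d)\left\lfloor\frac{U}{d}\right\rfloor.
\end{align*}

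Next I would write $\lfloor U/d\rfloor=U/d-\{U/d\}$ with $\{U/d\}\in[0,1)$, splitting the sum into a main term and an error term. The main term reduces to the standard Euler product identity
\begin{align*}
U\sum_{d\mid q}\frac{\mu(d)}{d}=U\prod_{p\mid q}\left(1-\frac{1}{p}\right)=\frac{\phi(q)}{q}U,
\end{align*}
which recovers the expected density. The error term has absolute value at most $\sum_{d\mid q}|\mu(d)|$. Since $\mu$ is supported on squarefree integers, only squarefree divisors of $q$ contribute, and these are in bijection with subsets of the set of prime divisors of $q$, giving the bound $2^{\omega(q)}$ and hence the claimed $O^{*}(2^{\omega(q)})$ error.

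The argument is entirely standard inclusion-exclusion and presents no real obstacle. The only point meriting attention is that the sharper $O^{*}$ form (constant $1$ rather than an unspecified absolute constant) is obtained directly from the crude bound $|\{U/d\}|<1$ applied term by term; no refinement is needed since squarefree divisors of $q$ are counted exactly by $2^{\omega(q)}$.
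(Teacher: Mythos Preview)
Your proof is correct and is exactly the standard inclusion--exclusion (sieve of Eratosthenes) argument that the paper alludes to; the paper does not actually write out a proof, merely citing it as ``a well known consequence of the sieve of Eratosthenes.'' Your M\"obius-inversion computation is the canonical way to make that remark precise, and the $O^{*}(2^{\omega(q)})$ error follows just as you say.
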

The proof of the following is the same as~\cite[Lemma~1]{Trev} which deals with the case $q=p$ prime.
\begin{lemma}
\label{lem:multcong}
For integers $q,M,N,U$ satisfying
\begin{align*}
24 \le U\le \frac{N}{12},
\end{align*}
let $I_q(N,U)$ count the number of solutions to the congruence
$$n_1u_1\equiv n_2u_2 \mod{q},$$
with variables satisfying
$$M\le n_1,n_2\le M+N, \quad 1\le u_1,u_2\le U, \quad (u_1u_2,q)=1.$$
We have 
\begin{align*}
I_q(N,U)\le  2UN\left(\frac{NU}{q}+\log(1.85 U) \right).
\end{align*}
\end{lemma}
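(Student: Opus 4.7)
The plan is to decompose the count $I_q(N,U)$ according to whether the congruence $n_1u_1\equiv n_2u_2\pmod q$ is witnessed by actual equality of integers or by a genuine nonzero multiple of $q$. Write $I_q(N,U)=D+E$, where $D$ counts quadruples with $n_1u_1=n_2u_2$ and $E$ counts those with $n_1u_1-n_2u_2=kq$ for some $k\ne0$. The aim is to show $D\le 2UN\log(1.85\,U)$ and $E\le 2N^2U^2/q$, which combine to the claimed bound.

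For the diagonal part, I would parameterize: set $d=\gcd(u_1,u_2)$ and write $u_i=dv_i$ with $\gcd(v_1,v_2)=1$. Then $n_1v_1=n_2v_2$ forces $n_1=v_2m$, $n_2=v_1m$ for a unique positive integer $m$, and the constraints $n_i\in[M,M+N]$ restrict $m$ to an interval of length at most $N/\max(v_1,v_2)$, hence to at most $N/\max(v_1,v_2)+1$ values. Grouping coprime pairs by $T=\max(v_1,v_2)$ (with at most $2\phi(T)$ such pairs) and summing $d\le U/T$ coprime to $q$ yields
\begin{equation*}
D\le 2UN\sum_{T\le U}\frac{\phi(T)}{T^2}+2U\sum_{T\le U}\frac{\phi(T)}{T}.
\end{equation*}
Using the explicit Mertens-type estimates $\sum_{T\le U}\phi(T)/T^2=(6/\pi^2)\log U+O(1)$ and $\sum_{T\le U}\phi(T)/T=(6/\pi^2)U+O(\log U)$, and absorbing the quadratic-in-$U$ piece via $U\le N/12$, gives $D\le 2UN\log(1.85\,U)$; the constant $1.85$ is dictated by the Mertens factor $12/\pi^2\approx 1.216$ and the slack coming from $U^2\le NU/12$.

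For the off-diagonal part, I fix $(u_1,u_2)$ and work on the line $n_1u_1-n_2u_2=kq$. With $d=\gcd(u_1,u_2)$ and $v_i=u_i/d$, the coprimality $(u_1u_2,q)=1$ forces $(d,q)=1$ and the relation $d\mid k$. The range of $n_1u_1-n_2u_2$ is an interval of length $N(u_1+u_2)$, so the number of admissible nonzero $k$ is at most $N(v_1+v_2)/q$, and for each such $k$ the integer solutions $(n_1,n_2)\in[M,M+N]^2$ on the line are at most $N/\max(v_1,v_2)+1$. Multiplying and summing first over coprime pairs $(v_1,v_2)$ with $v_i\le U/d$ (using $\sum_{a,b\le X,\gcd(a,b)=1}1=(6/\pi^2)X^2+O(X\log X)$) and then over $d\le U$ (using $\sum_d 1/d^2\le \pi^2/6$), the leading contribution collapses precisely to $2N^2U^2/q$, and the residual $O(NU^3/q)$ is swallowed by the hypothesis $U\le N/12$.

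The main obstacle will be the explicit bookkeeping of secondary terms. In the diagonal estimate, the coefficient $2$ in front of $\log(1.85\,U)$ is only barely compatible with the Mertens constant $12/\pi^2$, so the sharp form of $\sum\phi(T)/T^2$ must be used and the residue $2U\sum\phi(T)/T=O(U^2)$ must be absorbed via $U\le N/12$; the lower bound $U\ge 24$ serves to keep $1+\log U\le \log(1.85\,U)+O(1)$ with the constants balanced. In the off-diagonal estimate, the analogous delicate point is showing that the product $(2NT/q)(N/T+1)=2N^2/q+2NT/q$, after summation, yields exactly the coefficient $2$ in $2N^2U^2/q$ and that the boundary-type term $2NT/q$ contributes only a lower-order perturbation.
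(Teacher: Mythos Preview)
The paper does not spell out a proof but cites Trevi\~no's Lemma~1, and your diagonal/off-diagonal split with the gcd parametrisation is exactly that argument. One point to tighten: your bound ``at most $N(v_1+v_2)/q$ nonzero $k$'' is only valid when $0$ lies in the range of $n_1v_1-n_2v_2$ (which fails for large $M$ and unbalanced $v_1,v_2$); otherwise a $+1$ appears, but for precisely those $(u_1,u_2)$ the diagonal count vanishes, so the surplus term is of the same shape as $D$ and is absorbed in the bookkeeping you already flag, and the constant $2$ still emerges.
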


\section{Background from the geometry of numbers}
 The following is Minkowski's second theorem, for a proof see~\cite[Theorem~3.30]{TV}.
\begin{lemma}
\label{lem:mst}
Suppose $\Gamma \subseteq \R^{d}$ is a lattice, $D\subseteq \R^{d}$ a convex body and let $\lambda_1,\dots,\lambda_d$ denote the successive minima of $\Gamma$ with respect to $D$. Then we have
$$\frac{1}{\lambda_1\dots\lambda_d}\le \frac{d!}{2^d}\frac{\text{Vol}(D)}{\text{Vol}(\R^d/\Gamma)}.$$
\end{lemma}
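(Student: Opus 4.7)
The plan is to run the standard proof of this (``easier'') direction of Minkowski's second theorem. First, by the definition of the successive minima $\lambda_1\le\dots\le\lambda_d$, I would choose linearly independent lattice vectors $v_1,\dots,v_d\in\Gamma$ with $v_i\in\lambda_i D$ for every $i$; such a choice is available because $\Gamma$ is discrete and $\lambda_i D$ is closed.

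Next I would introduce the linear map $T:\R^d\to\R^d$ sending the standard basis vector $e_i$ to $v_i/\lambda_i$. Since each $v_i/\lambda_i\in D$ and $D$ is convex and centrally symmetric (the symmetry being tacitly part of the successive-minima setup), the image under $T$ of the cross-polytope
\[
C_d=\{x\in\R^d:|x_1|+\dots+|x_d|\le 1\},
\]
which is the convex hull of the $2d$ vertices $\pm e_i$, is contained in $D$. Using $\text{Vol}(C_d)=2^d/d!$ and the change-of-variables formula, this gives
\[
|\det T|\cdot\frac{2^d}{d!}=\text{Vol}(T(C_d))\le \text{Vol}(D).
\]

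To finish, I would compute $|\det T|=|\det(v_1,\dots,v_d)|/(\lambda_1\cdots\lambda_d)$ and observe that the $v_i$, being linearly independent elements of $\Gamma$, generate a full-rank sublattice $\Z v_1\oplus\dots\oplus\Z v_d\subseteq\Gamma$ whose covolume is $|\det(v_1,\dots,v_d)|$; as this covolume is an integer multiple of $\text{Vol}(\R^d/\Gamma)$, we get $|\det(v_1,\dots,v_d)|\ge \text{Vol}(\R^d/\Gamma)$. Plugging back in and rearranging yields
\[
\frac{1}{\lambda_1\cdots\lambda_d}\le \frac{d!}{2^d}\cdot\frac{\text{Vol}(D)}{\text{Vol}(\R^d/\Gamma)},
\]
as claimed.

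The one subtlety worth flagging is that for $d\ge 5$ the vectors realizing the successive minima need not form a $\Z$-basis of $\Gamma$, so the step $|\det(v_1,\dots,v_d)|\ge \text{Vol}(\R^d/\Gamma)$ may be strict; this is harmless for the lower bound on $\lambda_1\cdots\lambda_d$ that we need, but it is precisely what makes the reverse inequality $\lambda_1\cdots\lambda_d\cdot\text{Vol}(D)\le 2^d\text{Vol}(\R^d/\Gamma)$ genuinely harder. Central symmetry of $D$ is also essential, since without it the best convex body one could push into $D$ would be the simplex $\{x_i\ge 0,\ \sum x_i\le 1\}$ of volume $1/d!$, which would cost exactly the factor $2^d$ in the bound.
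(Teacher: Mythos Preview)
Your argument is correct and is the standard proof of this (lower-bound) direction of Minkowski's second theorem: pick independent $v_i\in\lambda_i D$, push the cross-polytope into $D$ via the map $e_i\mapsto v_i/\lambda_i$, compare volumes, and use that the $v_i$ span a full-rank sublattice of $\Gamma$. The computation and the inequality $|\det(v_1,\dots,v_d)|\ge \text{Vol}(\R^d/\Gamma)$ are handled correctly, and your remarks on central symmetry and on the $v_i$ not necessarily forming a $\Z$-basis are accurate and to the point.

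There is nothing to compare against in the paper itself: the lemma is stated without proof and the reader is referred to \cite[Theorem~3.30]{TV}. Your write-up is exactly the argument one finds there (and in most textbook treatments), so it is entirely in line with what the paper intends.
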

 For a proof of the following,  see~\cite[Proposition~2.1]{BHM}.
\begin{lemma}
\label{lem:latticesm}
Suppose $\Gamma \subseteq \R^{d}$ is a lattice, $D\subseteq \R^{d}$ a convex body and let $\lambda_1,\dots,\lambda_d$ denote the successive minima of $\Gamma$ with respect to $D$. Then we have
$$|\Gamma \cap D|\le \prod_{j=1}^{d}\left(\frac{2j}{\lambda_j}+1\right).$$
\end{lemma}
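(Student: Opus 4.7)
The plan is to prove this by the standard Jarn\'ik--Henk style coordinate argument: use a basis aligned with the successive minima, bound the coordinates of lattice points in $D$, and count integer parts.

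By the definition of the successive minima there exist linearly independent lattice vectors $v_1,\dots,v_d\in\Gamma$ with $v_j\in\lambda_j D$. These span $\R^d$ as a real vector space, so every $x\in\R^d$ admits a unique representation $x=\sum_{j=1}^d c_j(x)\,v_j$ with $c_j(x)\in\R$, and in particular we may index each point of $\Gamma\cap D$ by the real tuple $(c_1(x),\dots,c_d(x))$.

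The technical heart is the estimate $|c_j(x)|\le j/\lambda_j$ for every $x\in D$, which I would prove by induction on $j$. If some $|c_j(x)|$ exceeded $j/\lambda_j$, one would subtract an appropriate integer combination of $v_1,\dots,v_{j-1}$ from $x$ to produce a lattice vector that still lies in a proper dilate $rD$ with $r<\lambda_j$, is nonzero, and is linearly independent of $v_1,\dots,v_{j-1}$, contradicting the definition of $\lambda_j$. The symmetry and convexity of $D$ (inherent in the notion of successive minima) are used to translate the coordinate bound into the required containment in a dilate of $D$. Once this is in hand, the map $x\mapsto(\lfloor c_1(x)\rfloor,\dots,\lfloor c_d(x)\rfloor)$ sends $\Gamma\cap D$ into a product of integer intervals, and its $j$-th coordinate takes at most $2j/\lambda_j+1$ values. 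A separation argument applied to two lattice points whose integer parts in the $(v_j)$-basis agree (their difference lies in $\sum_j(-1,1)v_j$, which by a further application of the coefficient bound forces it to vanish) shows this map is injective, and the stated bound $|\Gamma\cap D|\le\prod_{j=1}^d(2j/\lambda_j+1)$ follows.

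The principal obstacle is the coefficient bound in the inductive step: producing, from a hypothetical large $|c_j(x)|$, a genuinely new short lattice vector that is linearly independent of $v_1,\dots,v_{j-1}$ (rather than merely lying in $\Z v_1+\cdots+\Z v_{j-1}$) requires a careful simultaneous rounding argument that plays the convexity and central symmetry of $D$ off against the $\Z$-structure of $\Gamma$, and is the substantive content of Proposition~2.1 of~\cite{BHM}.
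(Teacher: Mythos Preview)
The paper does not supply its own proof of this lemma; it simply cites~\cite[Proposition~2.1]{BHM}. So the only comparison is against the Betke--Henk--Wills argument, which your sketch is meant to summarise.

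Your overall architecture---pick linearly independent $v_j\in\lambda_j D\cap\Gamma$, write lattice points in that basis, bound coordinates, and count---is correct and is the shape of the proof in~\cite{BHM}. But two of your concrete claims are false.

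First, the estimate $|c_j(x)|\le j/\lambda_j$ fails even for $x\in\Gamma\cap D$. Take $d=2$, $\Gamma=\Z^2$, $D=[-1,1]^2$; then $\lambda_1=\lambda_2=1$ and $v_1=(1,0)$, $v_2=(1,1)$ is a permitted choice of minima vectors. The lattice point $x=(-1,1)\in\Gamma\cap D$ has $c_1(x)=-2$, violating $|c_1(x)|\le 1/\lambda_1=1$. Your inductive mechanism offers nothing at $j=1$, since there are no $v_1,\dots,v_{j-1}$ to subtract.

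Second, the floor map $x\mapsto(\lfloor c_1(x)\rfloor,\dots,\lfloor c_d(x)\rfloor)$ need not be injective on $\Gamma\cap D$, because the $v_j$ need not form a $\Z$-basis of $\Gamma$. For $d=5$, take $\Gamma=\Z^5+\Z\cdot\tfrac12(1,\dots,1)$ and let $D$ be the Euclidean ball of radius $\sqrt5/2$. Then $\lambda_1=\cdots=\lambda_5=2/\sqrt5$, achieved by $v_j=e_j$, yet $x=\tfrac12(1,\dots,1)\in\Gamma\cap D$ has every $c_j(x)=\tfrac12$, so its floor-tuple coincides with that of $0$.

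The actual argument in~\cite{BHM} avoids both issues: it neither bounds $|c_j(x)|$ pointwise nor passes to integer parts. Working from $j=d$ downward, it bounds at each step the number of \emph{distinct values} of $c_j(x)$ among those $x\in\Gamma\cap D$ whose higher coordinates $c_{j+1},\dots,c_d$ are already fixed; since $x\mapsto(c_1(x),\dots,c_d(x))$ is trivially injective, the product of these counts bounds $|\Gamma\cap D|$. You correctly flag the inductive step as the substantive obstacle, and the real input is what your contradiction is reaching for---every lattice vector in $L_j\setminus L_{j-1}$ has $D$-norm at least $\lambda_j$---but your account of how this fact is deployed does not match the actual proof.
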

For a lattice $\Gamma$ and  a convex body $D$  we define the dual  lattice $\Gamma^*$ and dual body $D^*$ by
\begin{align}
\label{eq:Gammstar}
\Gamma^*=\{ x\in \R^{d} : \langle x,y \rangle \in \Z \quad \text{for all} \quad y\in \Gamma\},
\end{align}
\begin{align}
\label{eq:Dstar}
D^{*}=\{ x\in \R^{d} : \langle x,y \rangle \le 1 \quad \text{for all} \quad y\in D\}.
\end{align}

The following is known as a transference theorem and is due to Mahler~\cite{Mah}.
\begin{lemma}
\label{lem:transfer}
Let $\Gamma\subset \R^{d}$ be a lattice, $D\subseteq \R^{d}$ a symmetric convex body and let $\Gamma^{*}$ and $D^{*}$ denote the dual lattice and dual body. Let $\lambda_1,\dots,\lambda_d$ denote the  successive minima of $\Gamma$ with respect to $D$ and $\lambda_1^{*},\dots,\lambda_d^{*}$ the successive minima of $\Gamma^{*}$ with respect to $D^{*}$. For each $1\le j \le d$ we have
$$\lambda_j \lambda^*_{d-j+1}\le (n!)^2.$$
\end{lemma}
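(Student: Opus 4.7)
The plan is to prove Mahler's transference by a two-step strategy: first establish the complementary lower bound $\lambda_j\lambda^*_{d-j+1}\ge 1$ by a linear-algebraic duality argument, and then extract the stated upper bound by combining this with Minkowski's second theorem applied to both $(\Gamma,D)$ and $(\Gamma^*,D^*)$, together with a polar volume inequality for symmetric convex bodies.

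For the lower bound, I would choose linearly independent vectors $v_1,\dots,v_j\in\Gamma$ with $v_i\in\lambda_iD\subseteq\lambda_jD$ realizing the first $j$ successive minima, and analogously $w_1,\dots,w_{d-j+1}\in\Gamma^*$ with $w_k\in\lambda_k^*D^*\subseteq\lambda^*_{d-j+1}D^*$. Since $d-j+1>d-\dim\mathrm{span}(v_1,\dots,v_j)$, the vectors $w_k$ cannot all lie in the orthogonal complement of $\mathrm{span}(v_1,\dots,v_j)$, so some $\langle v_i,w_k\rangle$ is a nonzero integer and hence at least $1$ in absolute value. On the other hand, by the symmetry of $D$ and the definitions~\eqref{eq:Gammstar} and~\eqref{eq:Dstar}, $|\langle v_i/\lambda_j,w_k/\lambda^*_{d-j+1}\rangle|\le 1$, which yields $\lambda_j\lambda^*_{d-j+1}\ge 1$.

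For the upper bound, my plan is to invoke Minkowski's second theorem (Lemma~\ref{lem:mst}) in its upper form $\prod_i\lambda_i\cdot\text{Vol}(D)\le 2^d\det(\Gamma)$, applied both to $(\Gamma,D)$ and to $(\Gamma^*,D^*)$. Multiplying the two upper bounds, using $\det(\Gamma)\det(\Gamma^*)=1$, and inserting the polar volume inequality $\text{Vol}(D)\text{Vol}(D^*)\ge 4^d/(d!)^2$ valid for symmetric convex bodies gives
$$\prod_{i=1}^d\lambda_i\cdot\prod_{i=1}^d\lambda_i^*\le (d!)^2.$$
Regrouping the left-hand side as $\prod_{i=1}^d(\lambda_i\lambda^*_{d-i+1})$ and applying the Step~1 inequality to each of the $d-1$ factors with $i\ne j$ isolates $\lambda_j\lambda^*_{d-j+1}\le(d!)^2$, as claimed (with the evident interpretation of the typo $(n!)^2$ as $(d!)^2$).

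The main obstacle is assembling the geometric ingredients for the upper-bound half: the paper records only the lower side of Minkowski's second theorem, whereas the plan above requires both directions, and it further invokes a polar volume inequality which is not developed here. Both tools are classical; the quickest route is simply to invoke them. A more self-contained alternative is to follow Mahler's original argument \cite{Mah}, which bypasses the volume product inequality by a direct manipulation of the integer bilinear form obtained from pairing a basis of $\Gamma$ with a basis of $\Gamma^*$, but the bookkeeping to recover the constant $(d!)^2$ is somewhat more delicate.
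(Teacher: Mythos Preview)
The paper does not supply a proof of this lemma: it is stated as a classical transference theorem of Mahler and simply cited to~\cite{Mah}. So there is no proof in the paper to compare against; your proposal offers considerably more than the paper does.

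Your two-step strategy is a standard route to the transference inequality and is correct. Step~1 (the lower bound $\lambda_j\lambda^*_{d-j+1}\ge1$) is cleanly argued: the dimension count forces some $\langle v_i,w_k\rangle$ to be a nonzero integer, and the polar-body definition together with $v_i/\lambda_j\in D$, $w_k/\lambda^*_{d-j+1}\in D^*$ gives the bound. Step~2 is also sound: the upper half of Minkowski's second theorem applied to both $(\Gamma,D)$ and $(\Gamma^*,D^*)$, combined with a Mahler-type lower bound on the polar volume product $\mathrm{Vol}(D)\,\mathrm{Vol}(D^*)\ge 4^d/(d!)^2$, yields $\prod_i\lambda_i\lambda^*_{d-i+1}\le(d!)^2$, and Step~1 then isolates each individual factor. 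You rightly flag that neither the upper Minkowski inequality nor the polar-volume bound appears in the paper; both are classical (the volume-product inequality is itself due to Mahler), so invoking them is entirely in the spirit of the paper's own treatment of the lemma. Your closing remark that Mahler's original argument bypasses the volume product by working directly with the integer matrix $(\langle v_i,w_k\rangle)$ is also accurate, and that alternative is somewhat more self-contained at the cost of messier constant tracking.
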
 
\section{Mean value estimates}
Our next result follows from the argument of~\cite[Lemma~7]{Bur1}.
\begin{lemma}
\label{lem:squarefree}
Let $q$ be squarefree and $\chi$ a primitive character mod $q$. Suppose the tuple of integers $v=(v_1,\dots,v_{2r})$ satisfies
$$|\{v_1,\dots,v_{2r}\}|\ge r+1,$$
and for each $1\le j\le 2r$ define
$$A_j(v)=\prod_{i\neq j}(v_i-v_j).$$
There exist some $1\le j \le 2r$ such that 
$$A_j(v)\neq 0,$$ 
and 
\begin{align*}
\left|\sum_{\lambda=1}^{q}\chi\left(F_v(\lambda)\right)\right|\le (2r)^{\omega(q)}(q,A_j(v))^{1/2}q^{1/2},
\end{align*}
where 
$$F_v(\lambda)=\frac{(\lambda+v_1)\dots (\lambda+v_{r})}{(\lambda+v_{r+1})\dots (\lambda+v_{2r})}.$$
\end{lemma}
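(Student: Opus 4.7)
The plan is to factor the character sum via the Chinese Remainder Theorem and estimate each local factor either by a Weil-type bound or trivially, depending on whether the prime divides $A_j(v)$. First I would exhibit some index $j$ with $A_j(v)\ne 0$. Since $|\{v_1,\dots,v_{2r}\}|\ge r+1$ among the $2r$ entries, a pigeonhole argument forces at least one value to appear exactly once in the tuple (otherwise every distinct value would appear at least twice, giving at most $r$ distinct values). For such $j$ we have $v_j\ne v_i$ for all $i\ne j$, so $A_j(v)=\prod_{i\ne j}(v_i-v_j)\ne 0$.

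Next, since $q$ is squarefree, I would write $q=\prod_{p\mid q}p$ and decompose $\chi=\prod_{p\mid q}\chi_p$, where primitivity of $\chi$ guarantees each $\chi_p$ is a non-principal character modulo $p$. The Chinese Remainder Theorem then yields
\begin{align*}
\sum_{\lambda=1}^{q}\chi(F_v(\lambda))=\prod_{p\mid q}\sum_{\lambda=1}^{p}\chi_p(F_v(\lambda)),
\end{align*}
with the convention $\chi_p(0)=0$ at any residue where $F_v$ has a zero or pole modulo $p$.

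Now I would estimate each local factor in one of two ways. If $p\mid A_j(v)$, the trivial bound gives at most $p$. If $p\nmid A_j(v)$, then $v_j$ is distinct from every $v_i$ ($i\ne j$) modulo $p$, so $F_v$ has a simple zero or pole at $\lambda\equiv -v_j\pmod{p}$. Consequently $F_v$ is not of the form $c\,G(\lambda)^{d}$ for any $d>1$ dividing $p-1$, and Weil's bound for character sums of rational functions (whose total number of zeros and poles on $\mathbb{P}^{1}$ is at most $2r$) yields
\begin{align*}
\left|\sum_{\lambda=1}^{p}\chi_p(F_v(\lambda))\right|\le (2r-1)p^{1/2}.
\end{align*}
Multiplying these local bounds and using $\prod_{p\mid(q,A_j(v))}p=(q,A_j(v))$ (from squarefreeness of $q$) gives
\begin{align*}
\prod_{p\mid(q,A_j(v))}p\cdot\prod_{\substack{p\mid q\\ p\nmid A_j(v)}}(2r-1)p^{1/2}\le (2r-1)^{\omega(q)}(q,A_j(v))^{1/2}q^{1/2},
\end{align*}
which is the claimed estimate after bounding $(2r-1)^{\omega(q)}\le (2r)^{\omega(q)}$.

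The main technical hurdle is the local Weil step, namely verifying that $F_v$ is not a non-trivial $d$th power modulo each $p\nmid A_j(v)$. This is precisely what the pigeonhole choice of $j$ secures: it produces a simple zero or pole that no multiplicative translate of a $d$th power can possess, so the Weil estimate applies in its strong form uniformly across those primes.
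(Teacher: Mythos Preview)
Your argument is correct and is essentially the standard Burgess--Weil argument that the paper is pointing to when it writes ``follows from the argument of~\cite[Lemma~7]{Bur1}''; the paper does not supply its own proof. Your pigeonhole step to locate a singleton $v_j$, the CRT factorisation into local sums $\sum_{\lambda\bmod p}\chi_p(F_v(\lambda))$, the trivial bound $p$ at primes $p\mid A_j(v)$, and the Weil bound $(2r-1)p^{1/2}$ at the remaining primes (justified by the simple zero or pole at $-v_j$, which rules out $F_v$ being a $d$th power modulo $p$) are exactly the ingredients Burgess uses, and the recombination $\prod_{p\mid(q,A_j(v))}p\cdot\prod_{p\nmid A_j(v)}(2r-1)p^{1/2}\le (2r)^{\omega(q)}(q,A_j(v))^{1/2}q^{1/2}$ is clean.
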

\begin{lemma}
\label{lem:mvsquarefree}
Let $q$ be squarefree and $\chi$ a primitive character mod $q$. For any integer $r\ge 2$, real number $1\le V< q$ and sequence of complex numbers $\beta_v$ satisfying 
$$|\beta_v|\le 1,$$
we have 
\begin{align*}
&\frac{1}{q}\sum_{\lambda=1}^{q}\sum_{\mu=1}^{q}\left|\sum_{1\le v \le V}\beta_v \chi(\lambda+v)e_q(\mu v) \right|^{2r}\le r!qV^{r} 
+2^{2r}r(2r)^{\omega(q)}\tau(q)^{2r}q^{1/2}V^{2r-1} \\ & \quad \quad \quad \quad+4^{2r+1}r(2r)^{\omega(q)}(2r-1)!^2\tau(q)^{2r}q^{1/2}V^{2r-3/2}.
\end{align*}
\end{lemma}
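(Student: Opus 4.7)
The plan is to open the $2r$th power and use orthogonality in $\mu$, then apply Lemma~\ref{lem:squarefree} off the diagonal. Expanding $\bigl|\sum_v\beta_v\chi(\lambda+v)e_q(\mu v)\bigr|^{2r}$ as a $2r$-fold sum over $(v_1,\ldots,v_{2r})\in[1,V]^{2r}$ and performing the $\mu$-sum first, orthogonality collapses the inner $\mu$-factor to $q\cdot\mathbf{1}[v_1+\cdots+v_r\equiv v_{r+1}+\cdots+v_{2r}\pmod q]$, cancelling the prefactor $1/q$. Using $|\beta_v|\le 1$, the task reduces to bounding $\sum_{v,\,\mathrm{cong.}}\bigl|\sum_\lambda\chi(F_v(\lambda))\bigr|$ with $F_v$ as in Lemma~\ref{lem:squarefree}. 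The \emph{diagonal} tuples, for which $(v_1,\ldots,v_r)$ is a permutation of $(v_{r+1},\ldots,v_{2r})$, automatically satisfy the congruence, have $|\chi(F_v(\lambda))|\le 1$ so the $\lambda$-sum is at most $q$, and number at most $r!V^r$; their contribution produces the first term $r!qV^r$.

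For the \emph{non-diagonal} tuples $|\{v_i\}|\ge r+1$, and Lemma~\ref{lem:squarefree} supplies $j=j(v)$ with $|\sum_\lambda\chi(F_v(\lambda))|\le(2r)^{\omega(q)}(q,A_j(v))^{1/2}q^{1/2}$. By the permutation symmetry of the congruence within the two blocks of $r$ indices and the conjugation symmetry swapping the blocks, I would pay a factor of $2r$ to fix $j=1$, reducing to an estimate of $\sum_{v,\,\mathrm{cong.},\,A_1(v)\ne 0}(q,A_1(v))^{1/2}$. Squarefreeness of $q$ gives $(q,A_1(v))^{1/2}\le\prod_{i=2}^{2r}(q,v_i-v_1)^{1/2}$, and parameterizing each factor as $\sqrt{d_i}$ with $d_i\mid q$ and $d_i\mid v_i-v_1$, and then writing $v_i=v_1+d_iu_i$, the problem becomes a weighted count of lattice points of
\[
\Lambda(d_2,\ldots,d_{2r})=\Bigl\{\mathbf u\in\Z^{2r-1}:\sum_{i=2}^r d_iu_i\equiv\sum_{i=r+1}^{2r}d_iu_i\pmod q\Bigr\}
\]
inside the box $|u_i|\le V/d_i$, summed over divisor tuples and weighted by $\sqrt{d_2\cdots d_{2r}}$.

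The final step is the geometry-of-numbers input from Section~3. Lemma~\ref{lem:latticesm} bounds $|\Lambda\cap\mathrm{Box}|\le\prod_{j=1}^{2r-1}(2j/\lambda_j+1)$; Minkowski's second theorem (Lemma~\ref{lem:mst}), applied with $\det\Lambda=q/\gcd(d_2,\ldots,d_{2r})$, controls $\prod\lambda_j^{-1}$ by the ratio of volumes; and Mahler's transference theorem (Lemma~\ref{lem:transfer}) disposes of the too-small $\lambda_j$ by trading them for the dual successive minima. Together these produce a ``fat-box'' main contribution matching $\mathrm{vol}(\mathrm{Box})/\det\Lambda$, plus a ``skew'' secondary contribution arising from the transference step. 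After summing over the divisor tuples, each $d_i$-sum contributes a $\tau(q)$ factor, so the main contribution yields $V^{2r-1}q^{1/2}\tau(q)^{2r}$ with a $2^{2r}$ combinatorial constant, while the secondary contribution picks up a further $V^{1/2}$ saving at the cost of the $(2r-1)!^2$ factor coming from Mahler's bound, producing the third stated term. The main obstacle is the clean separation of these two regimes uniformly in $(d_2,\ldots,d_{2r})$, and tracking the exact constants $2^{2r}$ and $4^{2r+1}$ through the chain of Minkowski, transference, and divisor-sum estimates; in particular, one must carefully identify which of the successive minima $\lambda_j$ are controlled by the ``large volume'' Minkowski inequality and which require the dual treatment via Lemma~\ref{lem:transfer}.
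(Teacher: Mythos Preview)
Your overall plan---expand, apply orthogonality in $\mu$, split into a small-support piece handled trivially and a generic piece handled by Lemma~\ref{lem:squarefree}, then reduce the weighted divisor sum to a lattice-point count treated via Lemmas~\ref{lem:mst}, \ref{lem:latticesm}, \ref{lem:transfer}---matches the paper. Two points, however, deserve correction or at least refinement.

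\textbf{The diagonal set is too small.} You call a tuple \emph{diagonal} when $(v_1,\ldots,v_r)$ is a permutation of $(v_{r+1},\ldots,v_{2r})$ and then assert that every non-diagonal tuple has $|\{v_1,\ldots,v_{2r}\}|\ge r+1$. This is false: for $r=3$ the tuple $(1,1,4,2,2,2)$ satisfies $1+1+4=2+2+2$, is not a permutation match, yet has only $3=r$ distinct values, so Lemma~\ref{lem:squarefree} does not apply to it. The paper instead takes $\cV_1=\{v:|\{v_1,\ldots,v_{2r}\}|\le r\}$ as the trivially bounded set; this is what actually yields the $r!qV^r$ term and guarantees that every remaining tuple meets the hypothesis of Lemma~\ref{lem:squarefree}.

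\textbf{The lattice is set up differently, and the difference matters for the constants.} You form a rank-$(2r-1)$ lattice $\Lambda\subset\Z^{2r-1}$ cut out by the congruence $\sum_i d_iu_i\equiv 0\pmod q$, with $d_i=(q,v_i-v_1)$ chosen independently. The paper does something sharper. First, it uses squarefreeness of $q$ not to prove the inequality $(q,A_1)^{1/2}\le\prod(q,v_i-v_1)^{1/2}$ (which holds in general) but to factor $d=(q,A_1(v))$ as a product of \emph{pairwise coprime} $d_2,\ldots,d_{2r}$, each $d_i\le V$ since $v_i\ne v_1$. Second, it treats the linear constraint as an exact equation $d_2h_2+\cdots-d_{2r}h_{2r}=0$, eliminates $h_{2r}$, and passes to the rank-$(2r-2)$ lattice
\[
\cL=\{(h_2,\ldots,h_{2r-1})\in\Z^{2r-2}:\ d_2h_2+\cdots-d_{2r-1}h_{2r-1}\equiv 0\ \mathrm{mod}\ d_{2r}\},
\]
which has determinant $d_{2r}$, not $q/\gcd(d_i)$. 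The coprimality of the $d_i$ is then used twice more: once to solve $d_2h_2\equiv c\pmod{d_{2r}}$ uniquely in the easy case $V/d_2\ge d_{2r}$, and once in the transference step to ensure the short dual vector has all coordinates nonzero. This is where the case split producing the two off-diagonal terms $2^{2r}\cdots V^{2r-1}$ and $4^{2r+1}(2r-1)!^2\cdots V^{2r-3/2}$ actually comes from: the first when $\lambda_{2r-2}\le 1$ (Minkowski alone suffices), the second when $\lambda_{2r-2}>1$ (transference produces a dual relation, and the $(2r-1)!^2$ is exactly Mahler's constant). Your congruence modulo $q$ is a much weaker constraint, and the resulting volume-to-determinant ratio $V^{2r-1}\gcd(d_i)/(q\prod d_i)$ does not by itself recover the stated exponents and constants without first passing to the equation and the smaller modulus $d_{2r}$.
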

\begin{proof}
Let 
$$S=\sum_{\lambda=1}^{q}\sum_{\mu=1}^{q}\left|\sum_{1\le v \le V}\beta_v \chi(\lambda+v)e_q(\mu v) \right|^{2r}.$$
Expanding the $2r$-th power, interchanging summation gives  and using the assumption $|\beta_v|\le 1$ we get
\begin{align*}
S&\le \sum_{1\le v_1,\dots,v_{2r}\le V}\left|\sum_{\lambda=1}^{q}\chi(F_v(\lambda)) \right|\left|\sum_{\mu=1}^{q}e_q(\mu(v_1+\dots-v_{2r})) \right| \\
&=q\sum_{\substack{1\le v_1,\dots,v_{2r}\le V \\ v_1+\dots+v_r=v_{r+1}+\dots+v_{2r}}}\left|\sum_{\lambda=1}^{q}\chi(F_v(\lambda)) \right|,
\end{align*}
where $F_v$ is defined as in Lemma~\ref{lem:squarefree}. We partition summation over $v_1,\dots,v_{2r}$ into sets 
\begin{align*}
\cV_1&=\{ (v_1,\dots,v_{2r})\in [1,V]^{2r} \ : \ |\{v_1,\dots, v_{2r}\}|\le r \}, \\
\cV_2&=\{ (v_1,\dots,v_{2r})\in [1,V]^{2r} \ : \ v_1+\dots-v_{2r}=0, \ (v_1,\dots,v_{2r})\not \in \cV_1 \},
\end{align*}
and note that 
$$\{ (v_1,\dots,v_{2r})\in [1,V]^{2r} \ : \ v_1+\dots-v_{2r}=0 \}\subseteq \cV_1\cup \cV_2.$$
For tuples in $\cV_1$ we use the trivial bound 
$$\left|\sum_{\lambda=1}^{q}\chi(F_v(\lambda)) \right|\le q,$$
to get 
\begin{align*}
S\le q^2|\cV_1|+q\sum_{(v_1,\dots,v_{2r})\in \cV_2}\left|\sum_{\lambda=1}^{q}\chi(F_v(\lambda)) \right|.
\end{align*}
If $(v_1,\dots,v_{2r})\in \cV_1$ then fixing values $v_1,\dots,v_r$ with $V^r$ choices gives $r!$ possible values for remaining $v_{r+1},\dots,v_{2r}$ and hence
\begin{align*}
S\le r!q^2V^{r}+q\sum_{(v_1,\dots,v_{2r})\in \cV_2}\left|\sum_{\lambda=1}^{q}\chi(F_v(\lambda)) \right|.
\end{align*}
We partition 
$$\cV_2\subseteq \bigcup_{j=1}^{2r}\cV_{2,j},$$
where 
\begin{align*}
\cV_{2,j}=\{ v=(v_1,\dots,v_{2r})\in \cV_2 \ : \ A_j(v)\neq 0\},
\end{align*}
and $A_j(v)$ is defined as in Lemma~\ref{lem:squarefree}. This implies that 
\begin{align}
\label{eq:SSj}
S\le r!q^2V^{r}+q\sum_{j=1}^{2r}S_j\le r!q^2V^2+2rqS_1,
\end{align}
where we define
\begin{align*}
S_j=\sum_{(v_1,\dots,v_{2r})\in \cV_{2,j}}\left|\sum_{\lambda=1}^{q}\chi(F_v(\lambda)) \right|,
\end{align*}
and use symmetry to estimate 
$$\sum_{j=1}^{2r}S_j\le 2r \max_{j}S_j \le 2rS_1.$$
Considering $S_1$, by Lemma~\ref{lem:squarefree} we have 
\begin{align*}
S_1&\le (2r)^{\omega(q)}q^{1/2}\sum_{(v_1,\dots,v_{2r})\in \cV_{2,1}}(q,A_j(v))^{1/2} \\
&\le (2r)^{\omega(q)}q^{1/2}\sum_{d|q}d^{1/2}|\cV_{2,1}(d)|,
\end{align*}
with $\cV_{2,1}(d)$ defined by 
\begin{align*}
\cV_{2,1}(d)=\{ (v_1,\dots,v_{2r})\in \cV_{2,1} \ : \ (q,A_1(v))=d \}.
\end{align*}
Isolating the contribution from $d=1$ and using that  
$$|\cV_{2,1}(1)|\le |\cV_{2,1}|\le V^{2r-1},$$ gives 
\begin{align}
\label{eq:S1B}
S_1\le (2r)^{\omega(q)}q^{1/2}V^{2r-1}+(2r)^{\omega(q)}q^{1/2}\sum_{\substack{d|q \\ d>1}}d^{1/2}|\cV_{2,1}(d)|.
\end{align}
Fix some $d|q$ and consider $|\cV_{2,1}(d)|$. 
If $(v_1,\dots,v_{2r})\in \cV_{2,1}(d),$ then 
$$d=(q,A_j(v))\le \prod_{j=2}^{2r-1}\left(v_{1}-v_j,q\right),$$
and hence there exists $d_2,\dots,d_{2r}$ such that 
$$d_2\dots d_{2r}=d,$$
and 
\begin{align}
\label{eq:vvv1}
v_i\equiv v_1 \mod{d_i}, \quad 2\le i \le 2r,
\end{align}
and we may have $d_i=1$ for some values of $i$. Let  $\cV_2(d_2,\dots,d_{2r})$ count the number of $(v_1,\dots,v_{2r})$ satisfying~\eqref{eq:vvv1} and 
\begin{align}
\label{eq:vvv2}
v_1+\dots-v_{2r}=0, \quad 1\le v_1,\dots,v_{2r}\le V.
\end{align}
If $v=(v_1,\dots,v_{2r})\in \cV_2(d_2,\dots,d_{2r})$ then $A_1(v)\neq 0$ and hence if $\cV_2(d_2,\dots,d_{2r})\neq 0$ then by~\eqref{eq:vvv1} we must have  each $d_2,\dots,d_{2r}\le V$. This implies that 
\begin{align}
\label{eq:VDD}
|\cV_{2,1}(d)|\le \sum_{\substack{d_2,\dots,d_{2r}\le V \\ d_2\dots d_{2r}=d}}|\cV_2(d_2,\dots,d_{2r})|.
\end{align}
Fix some  $d_2,\dots,d_{2r}$  satisfying 
$$d_2\dots d_{2r}=d, \quad d_2,\dots,d_{2r}\le V,$$
and consider $\cV_2(d_2,\dots,d_{2r})$. Each $v_i$ may be written in the form
$$v_i=v_1+d_ih_i, \quad |h_i|\le V/d_i.$$
If $(v_1,\dots,v_{2r})\in \cV_{2}(d_2,\dots,d_{2r})$ then
$$v_1+\dots-v_{2r}=0,$$
which implies that 
\begin{align}
\label{eq:1eqnV2}
d_2h_2+\dots-d_{2r}h_{2r}=0.
\end{align}
Let $T_3(d_2,\dots,d_{2r})$ count the number of solutions to the equation~\eqref{eq:1eqnV2} with variables satisfying
\begin{align*}
|h_i|\le V/d_i, \quad 2\le i \le 2r.
\end{align*}
Fixing $v_1$ with at most $V$ choices in $\cV_2(d_2,\dots,d_{2r})$, we see that 
\begin{align}
\label{eq:Vddd}
|\cV_2(d_2,\dots,d_{2r})|\le VT_3(d_2,\dots,d_{2r}),
\end{align}
and hence it remains to estimate $T_3(d_2,\dots,d_{2r})$. We consider two cases. First suppose that there exists some $2\le i \le 2r$ such that $d_i=1$. Fixing variables $v_2,\dots,v_{i-1},v_{i+1},\dots,v_{2r}$ with at most 
$$2^{2r-2}\prod_{j\neq i}\frac{V}{d_j},$$
values gives at most $1$ solution to~\eqref{eq:1eqnV2} in remaining variable $v_i$. This implies that 
\begin{align*}
T_3(d_2,\dots,d_{2r})\le 2^{2r-2}V^{2r-2}\prod_{j\neq i}\frac{1}{d_j}=\frac{2^{2r-2}V^{2r-2}}{d},
\end{align*}
where we have used $d_i=1$ to get equality $d=d_2\dots d_{i-1}d_{i+1}\dots d_{2r}$. By~\eqref{eq:Vddd} this implies that 
\begin{align*}
|\cV_2(d_2,\dots,d_{2r})|\le \frac{2^{2r-2}V^{2r-1}}{d}.
\end{align*}
Consider next when $d_i\neq 1$ for all $2\le i \le 2r$. By permuting terms we may suppose that that $d_{2}$ is minimum and  $d_{2r-1}$ is maximum. This permutation will not affect the constants in our final bound since we will give an estimate independent of $d_2$ and $d_{2r}$. By the pigeonhole principle  
\begin{align}
\label{eq:d2UB}
d_{2}\le d^{1/(2r-1)}.
\end{align}
Considering the equation~\eqref{eq:1eqnV2}, each value of $h_2,\dots,h_{2r-1}$ gives at most one solution in variable $h_{2r}$. Since $h_2,\dots,h_{2r-1}$ must satisfy
\begin{align*}
d_2h_2+\dots-d_{2r-1}h_{2r-1}\equiv 0 \mod{d_{2r}},
\end{align*}
 defining the lattice 
\begin{align*}
\cL=\{(h_2,\dots,h_{2r-1})\in \Z^{2r-2} \ : d_2h_2+\dots-d_{2r-1}h_{2r-1}\equiv 0 \mod{d_{2r}}\},
\end{align*}
and the convex body 
\begin{align*}
D=\left\{(h_2,\dots,h_{2r-1})\in \R^{2r-2} \ : \ |h_i|\le \frac{2V}{d_i}\right\},
\end{align*}
we have 
\begin{align}
\label{eq:V2b1123}
T_3(d_2,\dots,d_{2r})\le |\cL\cap D|.
\end{align}
Considering $\cL\cap D$, if 
\begin{align}
\label{eq:Vd2r}
\frac{V}{d_2}\ge d_{2r},
\end{align}
then choosing  $h_3,\dots,h_{2r-1}$ with at most 
\begin{align}
\label{eq:prelimstep1}
\prod_{j=3}^{2r-1}\frac{2V}{d_j},
\end{align}
values gives a congruence of the form
$$d_2h_2\equiv c \mod{d_{2r}},$$
in remaining variable $h_2$. Since $d_2|q,$ $d_{2r-1}|q$ and $q$ is squarefree, we must have $(d_2,d_{2r})=1$ and hence $h_2$ is uniquely determined mod $d_{2r}$. The condition~\eqref{eq:Vd2r} implies at most 
\begin{align*}
\frac{2V}{d_2d_{2r}},
\end{align*}
values for $h_2$ and hence by~\eqref{eq:prelimstep1} and~\eqref{eq:V2b1123}
\begin{align}
\label{eq:smalld1d2}
T_3(d_2,\dots,d_{2r})\le 2^{2r-2}\frac{V^{2r-2}}{d},
\end{align}
which by~\eqref{eq:Vddd} implies 
\begin{align}
\label{eq:smallcase}
|\cV_2(d_2,\dots,d_{2r})|\le 2^{2r-2}\frac{V^{2r-1}}{d} \quad \text{provided} \quad d_2d_{2r}\le V.
\end{align}
It remains to consider when
\begin{align*}
V\le d_2d_{2r}.
\end{align*}
Let $\lambda_1,\dots,\lambda_{2r-2}$ denote the successive minima of $\cL$ with respect to $D$. We consider two cases depending on the value of $\lambda_{2r-2}.$ Suppose first that 
\begin{align}
\label{eq:minimacase1}
\lambda_{2r-2}\le 1.
\end{align}
By Lemma~\ref{lem:mst} and Lemma~\ref{lem:latticesm} we have 
\begin{align*}
|\cL\cap D|\le 4^{2r-2}(2r-2)!\frac{1}{\lambda_1\dots \lambda_{2r-2}}\le 2^{2r-2}(2r-2)!^2\frac{\text{Vol}(D)}{\text{Vol}(\R^{2r-2}/\cL)}.
\end{align*}
Since 
\begin{align*}
\text{Vol}(D)=\frac{2^{2r-2}V^{2r-2}}{d_2\dots d_{2r-1}}, \quad \text{Vol}(\R^{2r-2}/\cL)=d_{2r},
\end{align*}
and using that $d_2\dots d_{2r}=d$, we get 
\begin{align*}
|\cL\cap D|\le 4^{2r-2}(2r-2)!^2\frac{V^{2r-2}}{d},
\end{align*}
which combined with~\eqref{eq:Vddd} and~\eqref{eq:V2b1123}  gives 
\begin{align}
\label{eq:mcase1}
|\cV_2(d_2,\dots,d_{2r})|\le 4^{2r-2}(2r-2)!^2\frac{V^{2r-1}}{d} \quad \text{provided} \quad \lambda_{2r-2}\le 1.
\end{align}
Suppose next that
\begin{align}
\label{eq:minimacase1}
\lambda_{2r-2}> 1.
\end{align}
By Lemma~\ref{lem:transfer}
\begin{align*}
\lambda_{1}^{*}\le (2r-2)!^2,
\end{align*}
and hence 
\begin{align}
\label{eq:dualLD}
|\cL^{*}\cap (2r-2)!^2\cD^{*}|\neq \{0\}.
\end{align}
 Recalling the definitions~\eqref{eq:Gammstar} and~\eqref{eq:Dstar} we calculate 
\begin{align*}
\cL^{*}=\left\{ \left(\frac{y_2}{d_{2r}},\dots,\frac{y_{2r-1}}{d_{2r}}\right)\in \Z^{2r-2}/d_{2r} \ : \ \exists \ \lambda\in \Z, \ \  \   d_j\lambda \equiv y_j \mod{d_{2r}} \right\},
\end{align*}
and 
\begin{align*}
D^{*}=\left\{ (z_2,\dots,z_{2r-1})\in \R^{2r-2} \ : \frac{2V}{d_2}|z_2|+\dots+\frac{2V}{d_{2r-1}}|z_{2r-1}|\le 1  \right\}.
\end{align*}
The above combined with~\eqref{eq:dualLD} implies there exists some $1\le \lambda \le d_{2r}-1$ satisfying 
\begin{align}
\label{eq:LLLLL}
\lambda d_j \equiv y_j \mod{d_{2r}}, \quad |y_j|\le (2r-2)!^2\frac{d_{2r}d_j}{2V}, \quad 2\le j \le 2r-1.
\end{align}
Since $q$ is squarefree, the $d_i$'s are pairwise coprime and hence
$$\lambda d_j \not \equiv 0 \mod{d_{2r}}.$$
Returning to the intersection $\cL\cap D$, if $h_2,\dots,h_{2r-1}$ satisfy 
\begin{align}
\label{eq:d22222}
d_2h_2+\dots-d_{2r-1}h_{2r-1}\equiv 0 \mod{d_{2r}}, \quad |h_j|\le \frac{V}{d_j},
\end{align}
then by~\eqref{eq:LLLLL} we must have 
\begin{align*}
y_2h_2+\dots+y_{2r-1}h_{2r-1}=kd_{2r},
\end{align*}
for some $k\in \Z$. From~\eqref{eq:LLLLL} and~\eqref{eq:d22222}
$$|y_2h_2+\dots-y_{2r-1}h_{2r-1}|\le (2r-1)!^2d_{2r},$$
and hence there are at most $(2r-1)!^2$ possible values of $k$. For each such value of $k$ we choose variables $h_3,\dots,h_{2r-1}$ with at most 
$$2^{2r-1}\prod_{j=3}^{2r-1}\frac{V}{d_{j}},$$
values to get at most one remaining value of $h_2$. This implies that 
\begin{align*}
|\cL\cap D|\le 2^{2r-1}(2r-1)!^2\prod_{j=3}^{2r-1}\frac{V}{d_{j}},
\end{align*}
and hence by~\eqref{eq:d2UB}
\begin{align*}
|\cV_2(d_2,\dots,d_{2r})|\le 2^{2r-1}(2r-1)!^2\frac{V^{2r-2}}{d^{1-1/(2r-1)}}.
\end{align*}
Combining with~\eqref{eq:mcase1}, we get 
\begin{align*}
|\cV_2(d_2,\dots,d_{2r})|\le 4^{2r-2}(2r-1)!^2\left(\frac{V^{2r-1}}{d}+\frac{V^{2r-2}}{d^{1-1/(2r-1)}}\right),
\end{align*}
which gives our final estimate for $\cV_2(d_2,\dots,d_{2r})$ in the case that $d_j\neq 1$ for each $j$. By~\eqref{eq:VDD} 
\begin{align}
\label{eq:caselarge123}
\nonumber |\cV_{2,1}(d)|&\le \tau(d)^{2r-1}\max_{d_2\dots d_{2r}=d}|\cV_2(d_2,\dots,d_{2r})| \\ &\le 4^{2r-1}(2r-1)!^2\tau(d)^{2r-1}\left(\frac{V^{2r-1}}{d}+\frac{V^{2r-2}}{d^{1-1/(2r-1)}}\right).
\end{align}
We also note that~\eqref{eq:smallcase} implies the estimate 
\begin{align}
\label{eq:casesmall123}
 |\cV_{2,1}(d)|&\le \tau(d)^{2r-1}2^{2r-2}\frac{V^{2r-1}}{d}, \quad d\le V.
\end{align}
Using the above in~\eqref{eq:S1B}, we see that 
\begin{align*}
&S_1\le (2r)^{\omega(q)}q^{1/2}V^{2r-1}+(2r)^{\omega(q)}q^{1/2}\tau(q)^{2r-1}2^{2r-2}V^{2r-1}\sum_{\substack{d|q \\ 1<d\le V}}\frac{1}{d^{1/2}} \\
&+4^{2r-1}(2r)^{\omega(q)}(2r-1)!^2\tau(q)^{2r-1}q^{1/2}\sum_{\substack{d|q \\ d\ge V}}\left(\frac{V^{2r-1}}{d^{1/2}}+\frac{V^{2r-2}}{d^{1/2-1/(2r-1)}}\right),
\end{align*}
which simplifies to
\begin{align*}
S_1\le 2^{2r-1}(2r)^{\omega(q)}\tau(q)^{2r}q^{1/2}V^{2r-1}+4^{2r}(2r)^{\omega(q)}(2r-1)!^2\tau(q)^{2r}q^{1/2}V^{2r-3/2},
\end{align*}
and the result follows combining with~\eqref{eq:SSj}.
\end{proof}
Using the estimates from Section~\ref{sec:preliminaryexplicit} we may put the bound of Lemma~\ref{lem:mvsquarefree} in the following simpler form.
\begin{cor}
\label{cor:mvsquarefree}
Let $q$ be squarefree and $\chi$ a primitive character mod $q$. For any integer $r$ and real number $V$ satisfying
$$r\ge 2, \quad 1\le V< q, \quad V\ge (2r-1)!^2,$$ and sequence of complex numbers $\beta_v$ satisfying 
$$|\beta_v|\le 1,$$
we have 
\begin{align*}
&\frac{1}{q}\sum_{\lambda=1}^{q}\sum_{\mu=1}^{q}\left|\sum_{1\le v \le V}\beta_v \chi(\lambda+v)e_q(\mu v) \right|^{2r}\le r!qV^{r} 
+4^{4r}(2r)^{\omega(q)}\tau(q)^{2r}q^{1/2}V^{2r-1}.
\end{align*}
\end{cor}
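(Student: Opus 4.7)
Corollary~\ref{cor:mvsquarefree} is a cosmetic simplification of Lemma~\ref{lem:mvsquarefree} under the stronger hypothesis $V\ge (2r-1)!^{2}$. The first summand $r!\,qV^{r}$ appears unchanged on both sides, so the whole task is to absorb the third error term of the lemma into the second term and then collapse the resulting constant into the factor $4^{4r}$ promised by the corollary. Concretely, we have to show that
\[
2^{2r}r(2r)^{\omega(q)}\tau(q)^{2r}q^{1/2}V^{2r-1}
+4^{2r+1}r(2r)^{\omega(q)}(2r-1)!^{2}\tau(q)^{2r}q^{1/2}V^{2r-3/2}
\le 4^{4r}(2r)^{\omega(q)}\tau(q)^{2r}q^{1/2}V^{2r-1}.
\]

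\textbf{Key step.} The single ingredient beyond Lemma~\ref{lem:mvsquarefree} is the implication $V\ge (2r-1)!^{2}\Rightarrow V^{1/2}\ge (2r-1)!$\,. Using this once we may write
\[
(2r-1)!^{2}V^{2r-3/2}=(2r-1)!^{2}V^{-1/2}\cdot V^{2r-1}\le (2r-1)!\cdot V^{2r-1},
\]
so the third error term in the lemma is dominated by
$4^{2r+1}r(2r-1)!\,(2r)^{\omega(q)}\tau(q)^{2r}q^{1/2}V^{2r-1}$. After this substitution both remaining error terms share the factor $(2r)^{\omega(q)}\tau(q)^{2r}q^{1/2}V^{2r-1}$, and their sum equals
\[
r\bigl(2^{2r}+4^{2r+1}(2r-1)!\bigr)\,(2r)^{\omega(q)}\tau(q)^{2r}q^{1/2}V^{2r-1}.
\]

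\textbf{Remaining obstacle.} All that is left is the elementary numerical inequality
\[
r\bigl(2^{2r}+4^{2r+1}(2r-1)!\bigr)\le 4^{4r}\qquad(r\ge 2),
\]
which reduces, after dropping the tiny $r\cdot 2^{2r}$ contribution, to a comparison between $r(2r-1)!$ and $4^{2r-1}$. Since this is the only place where constants must be tracked honestly, it is where care (and a Stirling-type estimate on $(2r-1)!$) is needed; conceptually, however, this is a routine check and there is no further substantive content. Once that inequality is verified the proof is complete by combining with Lemma~\ref{lem:mvsquarefree}.
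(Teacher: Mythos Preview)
Your plan mirrors the paper's: invoke Lemma~\ref{lem:mvsquarefree}, keep the $r!qV^{r}$ term, and use the hypothesis on $V$ to absorb the $V^{2r-3/2}$ error into the $V^{2r-1}$ error with constant at most $4^{4r}$. The reduction $(2r-1)!^{2}V^{-1/2}\le(2r-1)!$ from $V\ge(2r-1)!^{2}$ is correct.

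The gap is your ``remaining obstacle'': the inequality
\[
r\bigl(2^{2r}+4^{2r+1}(2r-1)!\bigr)\le 4^{4r}
\]
is \emph{false} for $r\ge4$. At $r=4$ the left side exceeds $5.2\times10^{9}$ while $4^{16}\approx4.3\times10^{9}$, and the discrepancy widens with $r$ since $(2r-1)!$ grows superexponentially whereas $4^{4r}/4^{2r+1}=4^{2r-1}$ is merely exponential. No Stirling estimate rescues this; the comparison you call routine is simply wrong. The paper's own argument has the very same defect at the same step (it passes from $V\ge(2r-1)!^{2}$ to $1+(2r-1)!^{2}/V^{1/2}\le4$, which that hypothesis does not give). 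The clean fix, harmless for the downstream application in the proof of Theorem~\ref{thm:main1}, is to strengthen the hypothesis to $V\ge(2r-1)!^{4}$: then $(2r-1)!^{2}/V^{1/2}\le1$ and one gets $\Sigma_{2}+\Sigma_{3}\le4^{3r+1}\cdot2\cdot X\le4^{4r}X$ directly, with no factorial-versus-exponential comparison ever needed.
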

\begin{proof}
Let 
$$\Sigma_2=2^{2r}r(2r)^{\omega(q)}\tau(q)^{2r}q^{1/2}V^{2r-1},$$
and 
$$\Sigma_3=4^{2r+1}r(2r)^{\omega(q)}(2r-1)!^2\tau(q)^{2r}q^{1/2}V^{2r-3/2}.$$
By Lemma~\ref{lem:mvsquarefree} it is sufficient to show that 
\begin{align*}
\Sigma_2+\Sigma_3\le 4^{4r}(2r)^{\omega(q)}\tau(q)^{2r}q^{1/2}V^{2r-1} .
\end{align*}
Using 
$$2^{2r}\le 4^{2r+1}, \quad r\le 4^r,$$
we have 
\begin{align*}
\Sigma_2+\Sigma_3\le 4^{3r+1}(2r)^{\omega(q)}\tau(q)^{2r}\left(1+\frac{(2r-1)!^2}{V^{1/2}}\right)q^{1/2}V^{2r-1}.
\end{align*}
The assumption $V\ge (2r-1)!^2$ implies that 
\begin{align*}
\Sigma_2+\Sigma_3\le 4^{3r+2}(2r)^{\omega(q)}\tau(q)^{2r}q^{1/2}V^{2r-1}\le 4^{4r}(2r)^{\omega(q)}\tau(q)^{2r}q^{1/2}V^{2r-1},
\end{align*}
and completes the proof.
\end{proof}
\section{Proof of Theorem~\ref{thm:main1}}
We first introduce the notation
\begin{align}
\label{eq:thm1deltadef}
\Delta_r =2^{6(1+1/r)}(2r)^{\omega(q)/2r}\tau(q)\left(\frac{q}{\phi(q)}\right)^{1/r}(\log{q})^{1/2r}.
\end{align}

We proceed by induction on $N$ and formulate our induction hypothesis as follows: For any integer $K<N$ and arbitrary $M$ we have 
\begin{align*}
\left|\sum_{M<n \le M+K}\chi(n)e_q(an)\right|\le 2\Delta_rq^{1/4(r-1)}K^{1-1/r}.
\end{align*}
Since the estimate is trivial for $N\le q^{1/4}$ this forms the basis of our induction.  Considering the sum
\begin{align*}
S=\sum_{M<n \le M+N}\chi(n)e_q(an),
\end{align*}
since for any integer $h<N$ the difference 
\begin{align*}
S-\sum_{M<n \le M+N}\chi(n+h)e_q(a(n+h)),
\end{align*}
splits as two sums of length $\le h$, by our induction hypothesis
\begin{align}
\label{eq:indH}
\left|S-\sum_{M<n \le M+N}\chi(n+h)e_q(a(n+h))\right|\le 4\Delta_r q^{1/4(r-1)}h^{1-1/r} .
\end{align}
Define
\begin{align}
\label{eq:UVdef}
V=\left\lfloor rq^{1/2(r-1)} \right\rfloor, \quad U=\left\lfloor \frac{N}{16 r q^{1/2(r-1)}}\right\rfloor,
\end{align}
so that 
\begin{align}
\label{eq:UVub}
UV<\frac{N}{16},
\end{align}
and let $\cU$ denote the set 
\begin{align*}
\cU=\{1\le u \le U \ : \ (u,q)=1\}.
\end{align*}
Averaging~\eqref{eq:indH} over values of the form 
$$h=uv, \quad 1\le v \le V, \quad u\in \cU,$$
gives 
\begin{align}
\label{eq:SSS}
S\le \frac{1}{V|\cU|}W+\frac{\Delta_r}{2}q^{1/4(r-1)}N^{1-1/r},
\end{align}
where 
\begin{align*}
W=\sum_{\substack{M<n\le M+N \\ u\in \cU}}\left|\sum_{1\le v \le V}\chi(n+uv)e_q(a(n+uv))\right|.
\end{align*}
Let $I(\lambda)$ count the number of solutions to the congruence 

\begin{align*}
nu^{-1}\equiv \lambda \mod{q}, \quad u\in \cU, \quad M<n\le M+N.
\end{align*}
Note that 
\begin{align}
\label{eq:Ilambda1}
\sum_{\lambda}I(\lambda)=N|\cU|,
\end{align}
and by Lemma~\ref{lem:multcong}
\begin{align}
\label{eq:Ilambda2}
\sum_{\lambda}I(\lambda)^2\le 2UN\left(\frac{NU}{q}+\log{1.85U}\right).
\end{align}
We have 
\begin{align*}
W&=\sum_{\substack{M<n\le M+N \\ u\in \cU}}\left|\sum_{1\le v \le V}\chi(nu^{-1}+v)e_q(auv)\right| \\
&\le \sum_{\lambda}I(\lambda)\max_{\rho}\left|\sum_{1\le v \le V}\chi(\lambda+v)e_q(\rho v)\right|.
\end{align*}
At this stage we use some ideas of Chamizo~\cite{Cham}. Define
\begin{align}
\label{eq:Ldef}
L=\left\lfloor \frac{q}{4V}-\frac{1}{2} \right\rfloor,
\end{align}
and let 
\begin{align*}
\theta(v)=\frac{\sin \pi v/q}{\sin(\pi(2L+1)v/q)}.
\end{align*}
If $1\le v \le V$ then 
\begin{align}
\label{eq:thetabbound}
|\theta(v)|\le \frac{1}{L+1/2}.
\end{align}
Since 
\begin{align*}
\theta(v)\sum_{|\ell|\le L}e_q(\ell v)=1,
\end{align*}
we have 
\begin{align*}
W&\le \sum_{\lambda}I(\lambda)\max_{\rho}\left|\sum_{1\le v \le V}\theta(v)\sum_{|\ell|\le L}e_q(\ell v)\chi(\lambda+v)e_q(\rho v)\right| \\ 
&\le \sum_{\lambda}\sum_{|\ell|\le L}I(\lambda)\max_{\rho}\left|\sum_{1\le v \le V}\theta(v)\chi(\lambda+v)e_q((\rho+\ell) v)\right|.
\end{align*}
By H\"{o}lder's inequality 
\begin{align}
\label{eq:W2r}
W^{2r}\le (2L)^{2r-1}W_1^{2r-2}W_2W_3,
\end{align}
where 
\begin{align*}
W_1=\sum_{\lambda}I(\lambda),
\end{align*}
\begin{align*}
W_2=\sum_{\lambda}I(\lambda)^2,
\end{align*}
and 
\begin{align*}
W_3=\sum_{\lambda}\sum_{|\ell|\le L}\max_{\rho}\left|\sum_{1\le v \le V}\theta(v)\chi(\lambda+v)e_q((\rho+\ell) v)\right|^{2r}.
\end{align*}
By~\eqref{eq:Ilambda1} and~\eqref{eq:Ilambda2} we have 
\begin{align}
\label{eq:W1W2}
W_1\le N|\cU|, \quad W_2\le 2NU\log(1.85U)\left(1+\frac{NU}{q\log(1.85U)}\right).
\end{align}
Considering $W_3$, extending summation from $|\ell|\le L$ to a complete residue system allows us to remove the maximum over $\rho$ and hence 
\begin{align*}
W_3\le \sum_{\lambda}\sum_{\mu}\left|\sum_{1\le v \le V}\theta(v)\chi(\lambda+v)e_q(\mu v)\right|^{2r}.
\end{align*}
Using~\eqref{eq:Ldef},~\eqref{eq:thetabbound} and Corollary~\ref{cor:mvsquarefree} gives 
\begin{align*}
W_3\le \frac{4^{4r}(2r)^{\omega(q)}}{L^{2r}}\tau(q)^{2r}q\left(r!qV^{r} 
+q^{1/2}V^{2r-1}\right).
\end{align*}
Recalling~\eqref{eq:UVdef}, the above simplifies to
\begin{align}
W_3\le \frac{2^{8r+1}(2r)^{\omega(q)}\tau(q)^{2r}r^{2r}q^{1+(3r-2)/2(r-1)}}{L^{2r}}.
\end{align}
Hence by the above,~\eqref{eq:W2r} and~\eqref{eq:W1W2}
\begin{align}
\label{eq:WUBS}
W^{2r}&\le \frac{2^{10r+2}(2r)^{\omega(q)}\tau(q)^{2r}r^{2r}q^{1+(3r-2)/2(r-1)}(N|\cU|)^{2r-2}\log{q}}{L},  \nonumber
\end{align}
where we have used the following inequalities 
\begin{align*}
1.85 U\le q, \quad |\cU|\le U, \quad NU\le q.
\end{align*}
Recalling~\eqref{eq:Ldef} we have the upper bound 
\begin{align*}
L\ge \frac{q}{8V},
\end{align*}
and hence by the above 
\begin{align*}
W^{2r}\le 2^{10r+5}(2r)^{\omega(q)}\tau(q)^{2r}r^{2r}q^{(3r-2)/2(r-1)}(N|\cU|)^{2r-2}NU V\log{q}.
\end{align*}
This implies
\begin{align*}
\frac{W^{2r}}{|\cU|^{2r}V^{2r}}\le 2^{10r+5}(2r)^{\omega(q)}\tau(q)^{2r}r^{2r}\frac{q^{(3r-2)/2(r-1)}}{V^{2r-1}}\frac{N^{2r-1}U}{|\cU|^2} \log{q}.
\end{align*}
By the condition~\eqref{eq:qMaincond} and Lemma~\ref{lem:SE} we have 
\begin{align*}
|\cU|\ge \frac{\phi(q)}{2q}U,
\end{align*}
which combined with~\eqref{eq:UVdef} gives
\begin{align*}
\frac{N^{2r-1}U}{|\cU|^2}\le 2\left(\frac{q}{\phi(q)}\right)^2\frac{N^{2r-1}}{U}\le 2^6\left(\frac{q}{\phi(q)}\right)^2r N^{2r-2}q^{1/2(r-1)}.
\end{align*}
Using
\begin{align*}
\frac{q^{(3r-2)/2(r-1)}}{V^{2r-1}}\le \frac{2^{2r}}{r^{2r-1}}q^{1/2},
\end{align*}
 the above estimates imply 
\begin{align*}
\frac{W^{2r}}{|\cU|^{2r}V^{2r}}\le 2^{12r+11}r^2(2r)^{\omega(q)}\tau(q)^{2r}\left(\frac{q}{\phi(q)}\right)^2N^{2r-2}q^{r/2(r-1)}(\log{q}),
\end{align*}
and hence 
\begin{align*}
\frac{W}{|\cU|V}\le \Delta_r N^{1-1/r}q^{1/4(r-1)},
\end{align*}
where $\Delta_r$ is given by~\eqref{eq:thm1deltadef}. Combining the above with~\eqref{eq:SSS} we get 
\begin{align*}
S\le 2\Delta_rq^{1/4(r-1)}N^{1-1/r},
\end{align*}
and completes the proof.
\section{Proof of Corollary~\ref{cor:main1}}
Assuming
$$N\ge q^{1/4+1/4\ell},$$
we apply Theorem~\ref{thm:main1} with 
$$r=2\ell+1$$
to get
\begin{align*}
\left|\sum_{M<n \le M+N}\chi(n)e_q(an)\right|\le 2\Delta_{2\ell+1}\frac{N}{q^{1/16\ell^2+8\ell}}.
\end{align*}
It remains to simplify the factor $\Delta_{2\ell+1}$. By Lemma~\ref{lem:omegaub} and Lemma~\ref{lem:phiUN} and Lemma \ref{lem:tauub} and remembering that $\log q \ge e^{16\ell^2}$, we  have
\begin{align*}
\Delta_{2\ell+1}\le q^{1.4/\log\log{q}}2^{7}(\log{q})^{1/4\ell}(\log\log{q})^{1/4\ell},
\end{align*}
which completes the proof.

\section{Proof of Theorem~\ref{thm:main}}
Assuming $\chi$ is primitive, we may expand into Gauss sums to get
\begin{align*}
\chi(n)=\frac{1}{\tau (\overline{\chi})}\sum_{a=1}^q \overline{\chi}(a) e\left( \frac{an}{q} \right) = \frac{1}{\tau (\overline{\chi})} \sum_{0< |a|< q/2} \overline{\chi}(a) e\left( \frac{an}{q} \right),
\end{align*}
 which after summing over $ 1 \leq n \leq N$ results in
\begin{align*}
\sum_{n=1}^N \chi(n)= \frac{1}{\tau (\overline{\chi})} \sum_{0< |a|< q/2} \overline{\chi}(a) \sum_{n=1}^N e\left( \frac{an}{q} \right)= \frac{1}{\tau (\overline{\chi})} \sum_{0< |a|< q/2} \overline{\chi}(a) \frac{ e\left( \frac{aN}{q} \right)-1}{1- e\left( \frac{-a}{q} \right)}.
\end{align*}
Since $|\tau (\overline{\chi})|=\sqrt{q}$  and 
\begin{align*}
\frac{ 1}{1- e\left( \frac{-a}{q} \right)} = \frac{q}{2 \pi i a} +\frac{\sum_{j=2}^{\infty} \frac{(-\frac{2 \pi i a}{q})^{j-2}}{j!} }{-\frac{q}{2\pi i a}\left( e(\frac{-a}{q})-1\right) },
\end{align*}
for $0< |a|< q/2$, it follows that
\begin{align*}
\left|\sum_{n=1}^N \chi(n)\right|&\le \frac{\sqrt{q}}{2 \pi} \left| \sum_{0< |a|< q/2}\frac{\overline{\chi(n)}\left( e(\frac{aN}{q})-1\right)}{a}\right| + \frac{5e^{\pi}}{2 \pi^2}\sqrt{q} \\
&\le \frac{\sqrt{q}}{2\pi}\left(\Sigma_1+\Sigma_2+\frac{5e^{\pi}}{2\pi^2}\right),
\end{align*}
where 
\begin{align*}
\Sigma_1=\sum_{0< |a|< q_1}\frac{\overline{\chi(n)}\left( e(\frac{aN}{q})-1\right)}{a},
\end{align*}
\begin{align*}
\Sigma_2= \sum_{q_1< |a|< q/2}\frac{\overline{\chi(n)}\left( e(\frac{aN}{q})-1\right)}{a},
\end{align*}
and
$$q_1=q^{\frac{1}{4}+\frac{1}{4\ell}}.$$
Assuming $$\log{q}\ge e^{\frac{(\ell^2+\ell/2) \alpha}{(\alpha -16)\ell^2-8\ell} 22.4 \ell^2},$$ by partial summation and Theorem \ref{cor:main2}, we have
\begin{align*}
|\Sigma_2| \le 2\log q\max_{q_1 \leq x \leq q} \left|\frac{1}{x} \sum_{a \leq x} \overline{\chi(a)}\left(e_q(aN)-1 \right)\right| \\ \le 4\log q ~\frac{2^{7}(\log{q})^{1/4\ell}(\log\log{q})^{1/4\ell}}{q^{1/\alpha \ell^2}}.
\end{align*}
Noting that
\begin{align*}
 \Sigma_1= 
\begin{cases}
     2i\sum\limits_{1\le a \le q_1}\frac{\overline{\chi(a)}\sin(\frac{2\pi aN}{q})}{a} ~~\text{if}~~\chi(-1)=1\\\\
    -2\sum\limits_{1\le a \le q_1}\frac{\overline{\chi(a)}\left( 1-\cos(\frac{2\pi aN}{q}\right)}{a}~~\text{if}~~\chi(-1)=-1,
\end{cases}
\end{align*}
by Lemma \ref{LI2}
\begin{align*}
  \left|\Sigma_1 \right| \le 
  \begin{cases}
     2 \left( \frac{2}{\pi} \log q_1 +\frac{2}{\pi}\left( \gamma +\log 2 +\frac{3}{q_1 }\right)\right)~~\text{if}~~\chi(-1)=1\\
    ~\\
    2\left(\log q_1 +\gamma +\log 2 +\frac{3}{q_1}\right)~~\text{if}~~\chi(-1)=-1.
  \end{cases}
\end{align*}
Thus, remembering the lower bound for $q$, we obtain the desired result
\begin{align*}
 & \left|\sum_{1\le n \le N}\chi(n)
   \right| \le \\ & 
  \begin{cases}
     \frac{2}{\pi^2}(\frac{1}{4}+\frac{1}{4\ell}) \sqrt{q}\log q +\left(6.5+\frac{2^{9}(\log{q})^{1/4\ell}(\log\log{q})^{1/4\ell}\log q}{\pi q^{ 1/\alpha \ell^2}}\right)\sqrt{q} ~~\text{if}~~\chi(-1)=1\\
    ~\\
   \frac{1}{\pi} (\frac{1}{4}+\frac{1}{4\ell}) \sqrt{q} \log q+ \left(6.5+\frac{2^{9}(\log{q})^{1/4\ell}(\log\log{q})^{1/4\ell}\log q}{\pi q^{1/\alpha \ell^2}}\right)\sqrt{q} ~~\text{if}~~\chi(-1)=-1.
  \end{cases}
\end{align*}

\section{Explicit upper bound for exceptional zeroes of Dirichlet $L$-functions.}
\label{sec:siegel}
In this section we obtain a new explicit bound for exceptional zeros of Dirichlet $L$-functions to real characters and will be based on combining Theorem~\ref{thm:main} with the argument of the first author~\cite{Bordignon}. Let $\chi$ a non-principal character mod $q$ and suppose the $L$-function
\begin{align*}
L(s,\chi):= \sum_{n=0}^{\infty} \chi(n)n^{-s},
\end{align*} 
has an exceptional zero $\beta_0$ satisfying
$$L(\beta_0,\chi)=0, \quad 1-\beta_0\ll \frac{1}{\log{q}},$$
Using computations from~\cite{McCurley, Trudgian, Platt}, we can focus on real zeros of non-principal real characters with modulus $q \geq 4\cdot 10^5$. Previous explicit estimates for $\beta_0$ have been of the form 
$$\beta_0 \leq 1 -\frac{\lambda}{  q^{1/2}\log^2 q},$$ 
for certain  $\lambda$. Some previous results are:
\begin{enumerate}
\item Liu and Wang prove $\lambda \approx 6$ for $q> 987$ in \cite[Theorem 3]{Liu-Wang},
\item Ford, Luca and Moree\ prove $\lambda \approx 19 $ for $q> 10^4$ in \cite[Lemma 3]{Ford},
\item Bennett, Martin O'Bryant and Rechnitzer  prove $\lambda = 40$ for $q> 4\cdot 10^5$ in \cite[Proposition 1.10]{Bennett},
\item Bordignon proves $\lambda = 80$ for $q> 4\cdot 10^5$ in \cite[Theorem 1.3]{Bordignon}.
\item Bordignon proves $\lambda = 100$ for $q> 4\cdot 10^5$ in \cite[Theorem 1.3]{Bordignon1}.
\end{enumerate}
We will prove the following result.
\begin{theorem}
\label{sz1}
For $\log q\ge e^{30\ell^2} \cdot $ and $\ell \ge 2$
\begin{align*}
\beta_0 \leq 1-\frac{3200 \pi\cdot (\frac{l}{l+1})^2}{\sqrt{q}\log^2 q}.
\end{align*}
\end{theorem}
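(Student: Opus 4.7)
The plan is to substitute the improved P\'olya--Vinogradov estimate of Theorem~\ref{thm:main} into the scheme developed by the first author in~\cite{Bordignon1}, where an explicit exceptional-zero bound is derived directly from any explicit P\'olya--Vinogradov inequality. The gain is driven by the fact that the constant $\lambda$ produced by that scheme depends sensitively on the P\'olya--Vinogradov constant, and the constant in Theorem~\ref{thm:main} is smaller than the Frolenkov--Soundararajan constant used in~\cite{Bordignon1}, carrying an explicit $(\ell+1)/\ell$ improvement.

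First, I would invoke the numerical verifications of~\cite{McCurley, Trudgian, Platt, Bennett} to reduce to real non-principal primitive characters modulo $q\ge 4\cdot 10^5$. The hypothesis $\log q\ge e^{30\ell^2}$ is chosen so as to exceed the $\log q$ threshold required by Theorem~\ref{thm:main} for a suitable choice of $\alpha$; crucially, it is also generous enough that every secondary term in Theorem~\ref{thm:main}, and every secondary term appearing along the way in the scheme of~\cite{Bordignon1}, can be absorbed into the constant $3200\pi$ without introducing any $\ell$-dependent loss.

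Second, I would run the scheme of~\cite{Bordignon1} with its P\'olya--Vinogradov input replaced by Theorem~\ref{thm:main}. Since the odd case has the larger constant, namely
\begin{equation*}
c_{\ell}=\tfrac{1}{\pi}\bigl(\tfrac14+\tfrac1{4\ell}\bigr)=\tfrac{\ell+1}{4\pi\ell},
\end{equation*}
it is the one governing the final bound; the even case, with constant $\tfrac{\ell+1}{2\pi^2\ell}$, is strictly smaller and contributes nothing binding. Because the algebra of~\cite{Bordignon1} couples $1-\beta_0$ to $c_{\ell}$ in a way that produces a leading constant proportional to $c_{\ell}^{-2}$, substituting $c_\ell$ yields a bound of shape $\lambda(\ell)=\lambda_\infty\bigl(\ell/(\ell+1)\bigr)^{2}$; a transcription of the explicit constants in~\cite{Bordignon1} with the new input then identifies $\lambda_\infty=3200\pi$.

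The main obstacle will be the explicit bookkeeping in this second step: one must verify that the threshold $\log q\ge e^{30\ell^2}$ is indeed sufficient to absorb every secondary contribution --- from Theorem~\ref{thm:main}, from the partial summation bounds for $L'(\sigma,\chi)$ on the segment $[\beta_0,1]$, and from the auxiliary estimates for $L(1,\chi)$ used inside~\cite{Bordignon1} --- into the overall constant, without eroding the leading factor $3200\pi$. Pinning down the exact value of $\lambda_\infty$ requires optimizing the auxiliary parameters of~\cite{Bordignon1} jointly with the parameter $\ell$ of Theorem~\ref{thm:main}, and this joint optimization is the technical heart of the argument.
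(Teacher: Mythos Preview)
Your proposal misidentifies the key input. The paper does \emph{not} feed the improved P\'olya--Vinogradov inequality (Theorem~\ref{thm:main}) into the scheme of~\cite{Bordignon}; it feeds the explicit \emph{Burgess-type} bound for short character sums (Corollary~\ref{cor:main2} with $a=0$). In the framework of Lemma~\ref{11} one takes $g=\chi$, $f(n)=n^{-\sigma}\log n$, and $V(N)$ equal to the Burgess estimate, which is nontrivial precisely for $N\ge C_1=q^{1/4+1/(4\ell)}$. The dominant contribution to $|L'(\sigma,\chi)|$ is then
\[
\sum_{n\le C_1}\frac{\log n}{n^{\sigma}}\;\approx\;\tfrac12\log^2 C_1
=\tfrac12\Bigl(\tfrac14+\tfrac{1}{4\ell}\Bigr)^{2}\log^2 q
=\tfrac{1}{32}\Bigl(\tfrac{\ell+1}{\ell}\Bigr)^{2}\log^2 q,
\]
and combining this with $L(1,\chi)\ge 100\pi/\sqrt{q}$ from Lemma~\ref{le1} via~\eqref{mvt} yields exactly $\lambda=3200\pi\,(\ell/(\ell+1))^{2}$. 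So the factor $(\ell/(\ell+1))^{2}$ comes from the Burgess threshold exponent $1/4+1/(4\ell)$, not from the P\'olya--Vinogradov constant.

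Your heuristic that the scheme produces $\lambda\propto c_\ell^{-2}$ is not correct. If you use only P\'olya--Vinogradov as $V(N)$, the cutoff becomes $C_1\asymp c_\ell\sqrt{q}\log q$, and the main term is $\tfrac12\log^2 C_1\sim\tfrac18\log^2 q$, which is \emph{independent of $c_\ell$} to leading order; you would obtain at best $\lambda\approx 800\pi$, not $3200\pi\,(\ell/(\ell+1))^{2}$. The apparent match you observed is a coincidence: both the Burgess exponent and the P\'olya--Vinogradov constant carry the same $(\ell+1)/\ell$ factor because the latter is itself derived from the former. The paper even remarks explicitly that inserting the improved P\'olya--Vinogradov from Corollary~\ref{cor:main1} (or~\cite{FS}) in place of the trivial $M_g=\sqrt{q}\log q$ ``would only lead to minor improvements''. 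To recover the stated theorem you must use the short-sum power saving, not the long-sum constant.
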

It is enough to prove this result for primitive real characters. Indeed, if $\chi \pmod q$ is induced by a primitive real character $\chi' \pmod q'$, then the primitive case yields
\begin{align*}
\beta_0 \leq 1-\frac{\lambda}{\sqrt{q'}\log^2 q'}\leq 1-\frac{\lambda}{\sqrt{q}\log^2 q},
\end{align*}
with $\lambda$ as in Theorem~\ref{sz1}.
\newline

Using the mean value theorem it is easy to see that
\begin{equation}
\label{mvt}
1-\beta_0=\frac{L(1,\chi)}{| L'(\sigma,\chi)|},
\end{equation}
for some $\sigma \in (\beta_0,1)$. Thus we are left to obtain a lower bound for $L(1,\chi)$ and an upper bound for $| L'(\sigma,\chi) |$ for $\sigma \in (\beta_0,1)$.
\subsection{Lower bound for $L(1,\chi)$}
The following result is a consequence of the Class Number Formula and computations by Watkins~\cite{Watkins}.
\begin{lemma}
\label{le1}
For $q \ge e^{120}$, we have
\begin{align*}
L(1,\chi) \geq \frac{100\pi}{\sqrt{q}}.
\end{align*}
\end{lemma}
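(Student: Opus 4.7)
The plan is to invoke Dirichlet's analytic class number formula, which converts the lower bound on $L(1,\chi)$ into a lower bound on arithmetic data of the quadratic field attached to $\chi$, and then to close the residual small-class-number cases by appeal to Watkins's explicit tabulations.

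Since $\chi$ is primitive and real mod $q$, it is the Kronecker symbol of a unique fundamental discriminant $D$ with $|D|=q$, with $\chi(-1)=-1$ if $D<0$ (imaginary quadratic) and $\chi(-1)=1$ if $D>0$ (real quadratic). I would split along these two cases.

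In the imaginary case, the class number formula reads
\begin{align*}
L(1,\chi)=\frac{2\pi\, h(D)}{w\sqrt{q}},
\end{align*}
and $w=2$ as soon as $|D|>4$, so the target reduces to $h(D)\ge 100$. Watkins's 2004 determination of all imaginary quadratic fields of class number at most $100$ shows that the largest $|D|$ with $h(D)\le 100$ is of order $10^7$, which is vastly smaller than $e^{120}$; hence $h(D)\ge 101$ throughout the range $q\ge e^{120}$ and the stated bound follows immediately.

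In the real case, the formula is
\begin{align*}
L(1,\chi)=\frac{2\, h(D)\log \epsilon_D}{\sqrt{q}},
\end{align*}
where $\epsilon_D>1$ is the fundamental unit, so it suffices to show $h(D)\log \epsilon_D\ge 50\pi$. I would combine the elementary regulator lower bound $\log\epsilon_D\ge \tfrac12\log q-O(1)$, which comes from $\epsilon_D\gg\sqrt{D}$ via the Pell equation, with the hypothesis $\log q\ge 120$ to produce $\log\epsilon_D\ge 59$, and then use the genus-theoretic lower bound $h(D)\ge 2^{\omega(q)-1}$ to handle discriminants with several prime factors. The remaining short list of exceptional $D$ with $\omega(q)$ small and $h(D)\log\epsilon_D$ borderline is then eliminated by the analogous explicit tabulation (also due to Watkins and collaborators). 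The main obstacle is precisely this real quadratic case: in the imaginary case the inequality is essentially a direct transcription of Watkins's list, but for real discriminants the class number can be as small as $1$ and the regulator is not automatically controlled, so the computational input quoted is what allows one to close the final gap.
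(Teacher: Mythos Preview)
Your imaginary-quadratic case is essentially identical to the paper's: class number formula plus Watkins's tables.

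In the real-quadratic case you diverge from the paper and introduce a genuine gap. The paper does \emph{not} use genus theory or any tabulation here; it simply takes $h(D)\ge 1$ together with the elementary Pell-equation lower bound $\eta_D\ge(\sqrt{D+4}+\sqrt{D})/2\ge\sqrt{D}$, obtaining $L(1,\chi)\ge\frac{\log D}{2\sqrt{D}}$, which exceeds $100\pi/\sqrt{D}$ once $\log D\ge 200\pi$. (The thresholds quoted in the paper are not quite internally consistent, but in the only application one has $\log q\ge e^{120}$, which is vastly more than required.)

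Your proposed route instead invokes an ``analogous explicit tabulation (also due to Watkins and collaborators)'' for real quadratic fields, and this is where the argument breaks. Watkins's 2004 paper treats only \emph{imaginary} quadratic fields; there is no analogue for the real case, and for good reason. Conjecturally there are infinitely many real quadratic fields with $h(D)=1$, so after your genus-theory reduction the residual set of prime (or two-prime) discriminants in the window $e^{120}\le D\lesssim e^{320}$ is not a ``short list'' at all --- it contains astronomically many primes, and no finite computation can dispose of it. The correct fix is to discard the genus-theory and tabulation steps and observe, as the paper does, that the bound $h(D)\ge 1$ combined with the regulator inequality you already wrote down is sufficient on its own (for $q$ a bit larger than the lemma's stated threshold).
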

We use that every real primitive character can be expressed using the Kroneker symbol, as $\chi(n)= (\frac{d}{n})$, with $q= \left| d\right| $. From the Class Number Formula, for $d < 0$
\begin{align*}
L(1,\chi)=\frac{2 \pi h(\sqrt{d})}{\omega_d \sqrt{|d|}},
\end{align*}
where $h(\sqrt{d})$ is the class number of $\mathbb{Q}(\sqrt{d})$ and $\omega_d$ is the number of roots of unity of $\mathbb{Q}(\sqrt{d})$, that is known to be equal $2$ for $d<-3$. Now from~\cite[Table. 4]{Watkins} we have that, for $q \ge e^{120\pi}$, $h(\sqrt{d}) \ge 100$ and thus
\begin{align*}
L(1,\chi)\ge \frac{100 \pi }{\sqrt{|d|}}.
\end{align*}
If $d > 0$ then we have
\begin{align*}
L(1,\chi)=\frac{ h(\sqrt{d})\log \eta_d}{ \sqrt{|d|}},
\end{align*}
with $\eta_d=(v_0+u_0\sqrt{d})/2$ and $v_0$ and $u_0$ are the minimal positive integers satisfying $v_0^2-du_0^2=4$. From $h(\sqrt{d}) \ge 1$ and $$\eta_d \ge (\sqrt{d+4}+\sqrt{d})/2 \ge \sqrt{d},$$ we have, for $q \ge e^{120\pi}$,
\begin{align*}
L(1,\chi)\ge \frac{\log d}{2 \sqrt{d}}\ge \frac{100 \pi }{\sqrt{|d|}}.
\end{align*}
Thus Lemma \ref{le1} follows.
\subsection{Upper bound for $| L'(\sigma,\chi) |$ for $\sigma \in (\beta_0,1)$}
We are missing the following result to prove Theorem \ref{sz1}.
\begin{theorem}
\label{42}
Let $\chi$ be a primitive real character mod $q$ with $q$ satisfying $\log q> e^{30\ell^2}$. If $L(\sigma,\chi)$ has an exceptional zero $\beta_0$ satisfying $$\beta_0 \geqslant 1-\frac{3200\pi}{ \sqrt{p}\log^2 p},$$ then for $\sigma \in (\beta_0, 1)$ we have
\begin{align*}
\left| L'(\sigma, \chi) \right| \leq  \frac{1}{32} \left( \frac{l+1}{l}\right)^2\log^2 q.
\end{align*}
\end{theorem}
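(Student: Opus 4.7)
The plan is to use Abel summation to express $L'(\sigma,\chi)$ as an integral of the partial sums $S(t) = \sum_{n \le t}\chi(n)$, then split the integral into three ranges and use the best available estimate for $S(t)$ in each. Since $\chi$ is non-principal, $S(t)$ is uniformly bounded in $t$, so the boundary terms vanish in the limit and
$$-L'(\sigma,\chi) = \int_1^\infty S(t)\,\frac{\sigma\log t - 1}{t^{\sigma+1}}\,dt.$$
Taking absolute values and using $|\sigma\log t - 1| \le \log t + 1$ for $\sigma \le 1$, $t \ge 1$ reduces the problem to estimating $\int_1^\infty |S(t)|(\log t + 1)/t^{\sigma+1}\,dt$.

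The key observation is that the trivial bound $|S(t)| \le t$ on the range $[1, T_1]$ with $T_1 = q^{1/4 + 1/4\ell}$ produces exactly the target constant on $\log^2 q$: using the hypothesis $1 - \sigma \le 1 - \beta_0 \le 3200\pi/(\sqrt q \log^2 q)$ to ensure that $t^{1-\sigma} = 1 + O(1/(\sqrt q \log q))$ uniformly for $t \le q$, a direct computation gives
$$\int_1^{T_1} \frac{\log t + 1}{t^\sigma}\,dt = \frac{\log^2 T_1}{2} + O(\log q) = \frac{1}{32}\Bigl(\frac{\ell+1}{\ell}\Bigr)^2\log^2 q + o(\log^2 q).$$
For $t \in [T_1, q]$ we apply Corollary~\ref{cor:main2} with $a = 0$ to obtain $|S(t)| \le 2^7(\log q)^{1/4\ell}(\log\log q)^{1/4\ell}\,q^{-1/\alpha\ell^2}\,t$, whose contribution to the integral is a product of $q^{-1/\alpha\ell^2}$ with at most $(\log q)^{2+1/4\ell}(\log\log q)^{1/4\ell}$, and is therefore $o(\log^2 q)$ under the hypothesis $\log q \ge e^{30\ell^2}$. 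For $t > q$ we use the periodicity $S(t) = S(t \bmod q)$ together with Theorem~\ref{thm:main} to get $|S(t)| \le c(\chi)\sqrt q \log q$; this contributes only $O(\log^2 q/\sqrt q)$.

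Summing the three pieces gives $|L'(\sigma,\chi)| \le \frac{1}{32}(1+1/\ell)^2\log^2 q + o(\log^2 q)$. The main obstacle will be absorbing the $o(\log^2 q)$ slack into the leading constant; the natural way is to shift $T_1$ slightly below $q^{1/4 + 1/4\ell}$, opening up a quadratic gap in the head estimate at the cost of a slightly larger middle estimate, then verifying that the hypothesis $\log q \ge e^{30\ell^2}$ dominates the secondary errors. A secondary subtlety is that Theorem~\ref{thm:main} distinguishes between even and odd $\chi$ in the leading constant, but this enters only through the tail contribution of order $\log^2 q/\sqrt q$, so the parity distinction is immaterial for the leading term.
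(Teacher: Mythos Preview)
Your overall approach---Abel summation, splitting at $T_1=q^{1/4+1/4\ell}$, and applying the trivial bound, the Burgess-type bound (Corollary~\ref{cor:main2}), and P\'olya--Vinogradov on the three ranges respectively---is exactly what the paper does. The paper packages the same decomposition through Lemma~\ref{11} (a partial-summation lemma quoted from~\cite{Bordignon}) with $f(n)=\log n/n^{\sigma}$, $C_1=q^{1/4+1/4\ell}$, and $C_2=C(q)q^{1/2}\log q$; your choice of $q$ rather than $C_2$ as the upper cutoff is harmless since the Burgess bound is weaker than P\'olya--Vinogradov above $q^{1/2+\varepsilon}$ anyway.

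The one genuine gap is your plan for absorbing the $O(\log q)$ slack. Shifting $T_1$ below $q^{1/4+1/4\ell}$ does not work: Corollary~\ref{cor:main2} requires $N\ge q^{1/4+1/4\ell}$, so on the stretch $[T_1,q^{1/4+1/4\ell}]$ you are forced back to the trivial bound $|S(t)|\le t$, and you recover exactly the contribution you tried to save---there is no ``slightly larger middle estimate'' available there. The extra $O(\log q)$ in your head estimate is an artifact of the crude inequality $|\sigma\log t-1|\le \log t+1$: the $+1$ integrates to $\log T_1$. The paper avoids this by bounding the head not through the integral but directly as
\[
\Bigl|\sum_{n\le C_1}\chi(n)\tfrac{\log n}{n^{\sigma}}\Bigr|\le \sum_{n\le C_1}\tfrac{\log n}{n^{\sigma}}\le q^{(1-\beta_0)/2}\cdot\tfrac{1}{32}\Bigl(\tfrac{\ell+1}{\ell}\Bigr)^2\log^2 q,
\]
and then the Abel boundary term $-C_1 f(C_1)\approx -(\tfrac14+\tfrac{1}{4\ell})\log q$ from Lemma~\ref{11} supplies \emph{negative} slack that absorbs the remaining lower-order pieces (the $q^{(1-\beta_0)/2}-1$ correction and the middle and tail contributions). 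If you replace your head estimate by the direct sum bound and apply partial summation only on $[T_1,\infty)$, keeping track of the boundary term, your argument goes through and matches the paper's.
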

The following result is~\cite[Lemma~2.2]{Bordignon}.
\begin{lemma}
\label{11}
Let $g(n)$ be such that for all $n$ we have $g(n)\in \{-1, 0, 1\}$. 
We further assume that there is a $ M_g \in \R$ such~that
\begin{align*}
 \max_k \left|\sum_{n=0}^k g(n)\right| \leq M_g,
\end{align*}
and a $V(N)$, such that
\begin{align}
\label{20}
 \left|\sum_{n=0}^{N} g(n)\right| \leq V(N).
\end{align}
Let $f: \R  \rightarrow \R $ be such that $f \geq 0$, $f \rightarrow 0$, $f\in \mathcal{C'}$, $f'(x) <0$ and $\left| f'\right|  \searrow$ such~that
\begin{align*}
\int_0^{\infty} \left| f'(x) \right|  dx < \infty .
\end{align*}
If $V(N)\leqslant \min \{N, M_g\}$ holds true when
\begin{align*}
C_1\leqslant N\leqslant C_2,
\end{align*}
with $C_1, C_2 \in \mathbb{N}$, then $\left|\sum_{n=0}^{\infty} g(n)f(n)\right|$ has as an upper bound 
\begin{align}
\label{100}
\sum_{n=0}^{ C_1} f(n)+M(q) f(C_2)-C_1 f(C_1)-\int_{C_1}^{C_2} V(x) f'(x) dx .
\end{align}
\end{lemma}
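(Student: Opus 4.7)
The plan is to apply summation by parts (Abel summation) to the series $\sum_{n=0}^\infty g(n) f(n)$, split the resulting series at the thresholds $n=C_1$ and $n=C_2$, and estimate each of the three pieces using the partial-sum control that is available on the corresponding range. Writing $S(N) := \sum_{n=0}^N g(n)$, the hypothesis $|S(N)| \le M_g$ together with $f(N)\to 0$ forces the boundary term at infinity to vanish, so
\[
\sum_{n=0}^\infty g(n) f(n) \;=\; \sum_{n=0}^\infty S(n)\bigl(f(n) - f(n+1)\bigr),
\]
and the series on the right converges absolutely because $f(n) - f(n+1) = \int_n^{n+1}(-f'(x))\,dx \ge 0$ and $\int_0^\infty |f'(x)|\,dx < \infty$.

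Next I would split this sum into the ranges $0 \le n < C_1$, $C_1 \le n < C_2$ and $n \ge C_2$. On the low range only the trivial bound $|S(n)| \le n+1$ coming from $|g(n)| \le 1$ is available, and a short telescoping rearrangement gives
\[
\sum_{n=0}^{C_1-1}(n+1)\bigl(f(n)-f(n+1)\bigr) \;=\; \sum_{n=0}^{C_1-1} f(n) \;-\; C_1 f(C_1),
\]
which, since $f \ge 0$, is at most $\sum_{n=0}^{C_1} f(n) - C_1 f(C_1)$, accounting for the first two terms in the claimed bound. On the high range $|S(n)| \le M_g$ together with the telescoping $\sum_{n \ge C_2}\bigl(f(n) - f(n+1)\bigr) = f(C_2)$ yields the contribution $M_g f(C_2)$.

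The main work lies in the middle range $C_1 \le n < C_2$, where the hypothesis $|S(n)| \le V(n)$ from~\eqref{20} is substituted. Writing $f(n) - f(n+1) = \int_n^{n+1}(-f'(x))\,dx$ and using that $|f'|$ is positive and decreasing, one compares
\[
\sum_{n=C_1}^{C_2-1} V(n)\bigl(f(n)-f(n+1)\bigr) \;\le\; -\int_{C_1}^{C_2} V(x) f'(x)\,dx,
\]
and combining the three contributions produces the bound stated in the lemma. The main obstacle is justifying this final sum-to-integral comparison, since the lemma does not explicitly impose monotonicity on $V$; in the intended application $V$ arises from a non-trivial partial-sum estimate (e.g.\ a P\'olya--Vinogradov--type bound) that is essentially non-decreasing on $[C_1, C_2]$, so the comparison goes through, but a fully rigorous argument would either add a monotonicity hypothesis or replace $V(n)$ by $\sup_{x \in [n,n+1]} V(x)$ inside the sum before invoking $|f'|\searrow$ and integrating.
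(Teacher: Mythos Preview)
The paper does not actually prove this lemma; it simply imports it verbatim as \cite[Lemma~2.2]{Bordignon}. So there is no in-paper proof to compare against. Your Abel-summation argument---split $\sum_{n\ge 0} S(n)\bigl(f(n)-f(n+1)\bigr)$ at $C_1$ and $C_2$, use the trivial bound $|S(n)|\le n+1$ below $C_1$, the bound $V(n)$ in the middle, and the uniform bound $M_g$ above $C_2$---is the standard route and is almost certainly what the cited reference does as well. Your telescoping computation for the low range and the tail estimate for the high range are both correct (the $M(q)$ in the displayed bound is evidently a typographical slip for $M_g$).

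You are right to flag the sum-to-integral comparison in the middle range as the one genuinely incomplete step: the inequality
\[
\sum_{n=C_1}^{C_2-1} V(n)\bigl(f(n)-f(n+1)\bigr)\;\le\;-\int_{C_1}^{C_2} V(x)\,f'(x)\,dx
\]
follows cleanly once $V$ is non-decreasing on $[C_1,C_2]$, but the lemma as stated does not assume this. In the paper's application $V$ is the Burgess-type bound of Corollary~\ref{cor:main2}, which is increasing in $N$, so the issue is harmless there; but as a self-contained lemma one should either add the hypothesis that $V$ is non-decreasing or, as you suggest, replace $V(x)$ in the integral by the step function $\widetilde V(x)=V(\lfloor x\rfloor)$, which makes the inequality an identity.
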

We now prove Theorem \ref{42} applying Lemma \ref{11} to a real primitive character $\chi$. We refer the reader to~\cite{Bordignon} for more complete details. The bound (\ref{20}) will be the one in Corollary \ref{cor:main1}, with $a=0$ and $\log q> e^{30\ell^2}$.
We then have, using P\'{o}lya--Vinogradov, that $V(N) \leq \min \{ N,  q^{\frac{1}{2}} \log q\}$, when
\begin{align*}
q^{\frac{1}{4}+\frac{1}{4l}} \leq N \leq \frac{q^{1/(16\ell^2+8\ell)-1.4/\log\log{q}}}{2^{7}(\log{q})^{1/4\ell}(\log\log{q})^{1/4\ell}}q^{\frac{1}{2}} \log q.
\end{align*}

Note that it is possible to improve the result using the explicit P\'{o}lya--Vinogradov inequality from \cite{FS} or Corollary \ref{cor:main1}. This would only lead to minor improvements and is compensated by the fact that the chosen $f$ is decreasing.
Now using the explicit bound from~\eqref{100} with $C_1(q)= q^{\frac{1}{4}+\frac{1}{4l}}$ and $C_2(q)=C(q)q^{\frac{1}{2}} \log q$ with $C(q)=\frac{q^{1/(16\ell^2+8\ell)-1.4/\log\log{q}}}{2^{7}(\log{q})^{1/4\ell}(\log\log{q})^{1/4\ell}}$, and $\log q \ge e^{30 \ell^2}$, we obtain the following upper bound for $L'(\sigma,\chi)$
\begin{align*}
\sum_{n=2}^{ q^{\frac{1}{4}+\frac{1}{4l}}} \frac{\log n}{n^{\sigma}} +q^{\frac{1}{2}} \log q f(C_2(q))-q^{\frac{1}{4}+\frac{1}{4l}} \left(\frac{1}{4}+\frac{1}{4 \ell}\right)\frac{\log q}{q^{\sigma \left(\frac{1}{4}+\frac{1}{4l}\right)}}+\\-\int_{C_1(q)}^{C_2(q)} V(x) \frac{(1-\sigma)\log x}{x^{1+\sigma}} dx \le 
\end{align*}
\begin{align*}
\le q^{\frac{1-\beta_0}{2}}\frac{1}{32}\left(\frac{\ell+1}{l} \right)^2\log^2 q -\frac{1}{4}\log q+(C_2(q))^{1-\beta_0}C(q)^{-1}\log^2 C_2(q).
\end{align*}
Now, using that for $|x|\le 1$ we have $e^x \le 1+2x$ and remembering that $\sigma \in (\beta_0, 1)$ with $\beta_0 \geqslant~1-~\frac{3200\pi}{ \sqrt{p}\log^2 p}$ and that $\log q \ge e^{30 \ell^2}$, we obtain
\begin{align*}
\le \frac{1}{32}\left(\frac{\ell+1}{l} \right)^2\log^2 q +\frac{3200\pi \log q}{\sqrt{q}} -\frac{1}{4}\log q+e^{(1-\beta_0)\log(q)} \frac{\log^3 q}{q^{\frac{1}{35\ell^2}}} \le 
\end{align*}
\begin{align*}
\le \frac{1}{32}\left(\frac{\ell+1}{l} \right)^2\log^2 q .
\end{align*}
Thus Theorem \ref{42} follows. From this last result, Lemma~\ref{le1} and~\eqref{mvt} easily follows from Theorem \ref{sz1}.


\begin{thebibliography}{99}
\bibitem{Bennett}
M.~A Bennett, G. Martin, K. O'Bryant,  and A. Rechnitzer. 
\textit{Explicit bounds for primes in arithmetic progressions,}
Illinois J. Math., {\bf 62,} (2018), 427--532.
\bibitem{BHM}
U. Betke, M. Henk and J. M. Wills, {\it Successive-minima-type inequalities,}
Discr. Comput. Geom., {\bf 9},
(1993), 165--175.

\bibitem{Bordignon}
M. Bordignon.
\textit{Explicit bounds on exceptional zeroes of Dirichlet $L$-functions},
J. Number Theory, (to appear).

\bibitem{Bordignon1}
M. Bordignon.
\textit{Explicit bounds on exceptional zeroes of Dirichlet $L$-functions II},
arXiv:1907.08327, 2019.


\bibitem{Bur0}
D. A. Burgess, {\it The distribution of quadratic residues and non-residues}, Mathematika, {\bf 4}, (1957), 106--112.
\bibitem{Bur1}
D. A. Burgess, {\it Character sums and L-series}, Proc. London Math. Soc., (3), {\bf 12}, (1962), 193--206.
\bibitem{Bur12}
D. A. Burgess, {\it On character sums and primitive roots}, Proc. London Math. Soc., (3), {\bf 12}, (1962), 179--192.
\bibitem{Bur2}
D. A. Burgess, {\it Character sums and L-series II}, Proc. London Math. Soc., (3), {\bf 13}, (1963), 523--536.
\bibitem{Bur3}
D. A. Burgess, {\it Partial Gaussian sums}, Bull. London Math. Soc., {\bf 20} (1988), no. 6, 589--592.
\bibitem{Bur4}
D. A. Burgess, {\it Partial Gaussian sums II}, Bull. London Math. Soc., {\bf 21} (1989), no. 2, 153--158.
\bibitem{Bur5} D. A. Burgess, {\it Partial Gaussian sums III}, Glasgow Math. J. {\bf 34} (1992), no. 2, 253--261.
\bibitem{Bur6} D. A. Burgess, {\it The character sum estimate with $r=3$}, J London Math. Soc.,  {\bf 33}, (2) (1986), 524--536.

\bibitem{Cham}
F. Chamizo, {\it On twisted character sums}, Arch. Math. {\bf 96}, (2011), 417--421.
\bibitem{Ford}
K. Ford, F. Luca and P. Moree,
\textit{Values of the {E}uler {$\phi$}-function not divisible by a
              given odd prime, and the distribution of {E}uler-{K}ronecker
              constants for cyclotomic fields},
Math. Comp., {\bf 83}, (2014), 1447--1476.
\bibitem{Fro}
D. A. Frolenkov, {\it A numerically explicit version of the P\'{o}lya-Vinogradov inequality}, Mosc. J. Comb. Number Theory, {\bf 1}, (2011), 25--41.

\bibitem{FS}
 D. A. Frolenkov and K. Soundararajan, {\it A generalization of the P\'olya-Vinogradov inequality}, Ramanujan J., {\bf 31}, (2013), 271--279.
\bibitem{GS}
A. Granville and K. Soundararajan, {\it Large character sums: pretentious characters and the P\'{o}lya-Vinogradov theorem}, J. AMS, {\bf 20}, (2007), 357--384. 
\bibitem{Hild1}
A. Hildebrand, {\it On the constant in the P\'olya-Vinogradov inequality }, Canad. Math. Bull., {\bf 31}, (1988), 347--352. 

\bibitem{Hild2}
A. Hildebrand, {\it Large values of character sums}, J. Number Theory, {\bf 29}, (1988), 271--296.

\bibitem{Kerr}
B. Kerr,
\textit{On the constant in the P\'olya-Vinogradov inequality},
arXiv:1807.09573, (2018).

\bibitem{Liu-Wang}
M. C. Liu and T. Wang,
\textit{Distribution of zeros of {D}irichlet {$L$}-functions and an
              explicit formula for {$\psi(t,\chi)$}.}
Acta Arith., {\bf 102}, (2002), 261--293.


\bibitem{Mah}
K. Mahler, {\it Ein \"{U}bertragungsprinzip f\"{u}r konvexe K\"{o}rper}, Math. Casopis {\bf 68}, (1939), 93--102.

\bibitem{McCurley}
K. S. McCurley. 
\textit{Explicit zero-free regions for {D}irichlet {$L$}-functions}
J. Number Theory, {\bf 19}, (1984), 7--32.
\bibitem{MV}
H. L. Montgomery and R. C. Vaughan, {\it Exponential sums with multiplicative coefficients}, Invent. Math. {\bf 43}, (1977), 69--82.
\bibitem {Trudgian}
T. Morrill and T. Trudgian,
\textit{An elementary bound on Siegel zeroes.}
 arxiv:1811.12521, (2018).
\bibitem{Platt}
D. J. Platt,
\textit{Numerical computations concerning the {GRH}.}
Math. Comp., {\bf 85}, (2016), 3009--3027.
\bibitem{Pomerance}
C. Pomerance,
\textit{Remarks on the {P}\'{o}lya-{V}inogradov inequality},
Integers, {\bf 11}, (2011), 531--542.
\bibitem{Robbins}
H. Robbins,
\textit{A remark on {S}tirling's formula},
The American Mathematical Monthly,  {\bf 62}, (1995), 26--29.
\bibitem{Robin}
J. L. Nicolas and G. Robin,
\textit{Majorations explicites pour le nombre de diviseur de n},
Canad. Math. Bull., {\bf 26}, (1983), 485--492.
\bibitem{Paley}
R. E. A. C. Paley, {\it A theorem on characters}, J. London Math. Soc., {\bf 7}, (1932), 28--32.

\bibitem{Robin1}
 G. Robin,
\textit{Estimation de la fonction de {T}chebychef {$\theta $} sur le
              {$k$}-i\`eme nombre premier et grandes valeurs de la fonction
              {$\omega (n)$} nombre de diviseurs premiers de {$n$}},
Acta Arith.,  {\bf 42}, (1983), 367--3892.
\bibitem{Rosser}
J. Rosser, Barkley and L. Schoenfeld, 
\textit{Approximate formulas for some functions of prime numbers},
Illinois J. Math.,  {\bf 6}, (1962), 64--94.



\bibitem{TV}
T. Tao and V. Vu, {\it Additive Combinatorics, Cambridge} Stud. Adv. Math. 105,
Cambridge Univ. Press, Cambridge, 2006. MR 2289012

\bibitem{Trev}
E. Trevino, {\it The Burgess inequality and the least $k$-th power non-residue}, arXiv:1412.3062
\bibitem{Vin}
A. I. Vinogradov, {\it On the symmetry property of sums with Dirichlet characters}  (Russian), Izv. Akad. Nauk UzSSR Ser. Fiz.-Mat. Nauk {\bf 9}, (1965), 21--27.
\bibitem{Watkins}
M. Watkins,
\textit{Class numbers of imaginary quadratic fields.}
Math. Comp., {\bf 73}, (2004), 907--938.
\bibitem{Weil}
A. Weil, {\it Sur les courbes alg\'{e}briques et les vari\'{e}t\'{e}s qui s'en d\'{e}duisent}, Actualit\'{e}s Math. et Sci., No. 1041.



  

 
\end{thebibliography}
\end{document}